\numberwithin{equation}{section}
\newtheorem{theorem}{Theorem}[section]
\newtheorem{lemma}[theorem]{Lemma}
\newtheorem{claim}[theorem]{Claim}
\newtheorem{corollary}[theorem]{Corollary}
\newtheorem{conjecture}[theorem]{Conjecture}
\newtheorem{example}[theorem]{Example}
\def\Z{\mathbb Z}
\def\S{\mathcal S}
\def\D{\mathcal D}
\def\Ce{\mathcal C}
\def\For{\mathcal F}
\def\L{\mathcal L}
\def\K{\mathcal K}
\def\M{\mathcal M}
\def\A{\mathcal A}
\def\Fo{\mathcal F}
\def\Q{\mathbb Q}
\def\F{\mathbb F}
\def\De{\Delta}
\def\V{\mathfrak V}
\def\B{\mathfrak B}
\def\supp{\mbox{\rm supp}}
\def\sign{\mbox{\rm sign}}
\def\char{\mbox{\rm char}}
\newcommand{\abs}[1]{\lvert#1\rvert}
\newcommand{\Symm}{\mathfrak{S}}
\newcommand{\qbin}[2]{\genfrac{[}{]}{0pt}{}{#1}{#2}}
\newcommand{\An}{\abs{\ba}}
\newcommand{\ba}{\boldsymbol{a}}
\newcommand{\bx}{\boldsymbol{x}}
\newcommand{\bc}{\boldsymbol{c}}
\newcommand{\bB}{\boldsymbol{B}}
\DeclareMathOperator{\id}{id}
\DeclareMathOperator{\CT}{CT}
\DeclareMathOperator{\Hom}{Hom}
\begin{document}

\begin{frontmatter}



\title{A new approach to constant term identities and Selberg-type integrals}


\author{Gyula K\'arolyi\corref{cor1}}
\cortext[cor1]{Corresponding author. 
On leave from E\"otv\"os University, Budapest.}
\address{School of Mathematics and Physics,
The University of Queensland, Brisbane, QLD 4072, Australia}
\ead{karolyi@cs.elte.hu}

\author{Zolt\'an L\'or\'ant Nagy}
\address{Alfr\'ed R\'enyi Institute of Mathematics,
Re\'altanoda utca 13--15, Budapest, 1053 Hungary}
\ead{nagyzoltanlorant@gmail.com}

\author{Fedor V. Petrov\fnref{label2,label3}}
\fntext[label2]{Also at St. Petersburg State University.}
\fntext[label3]{Also at B.N. Delone International Laboratory 
``Discrete and Computational Geometry".}
\address{Steklov Institute of Mathematics,
Fontanka 27, 191023 St. Petersburg, Russia}
\ead{fedyapetrov@gmail.com}

\author{Vladislav Volkov\fnref{label3}}
\address{Saint-Petersburg State University, 
Universitetsky prospekt 28, 
198504 St. Petersburg, Russia}
\ead{vladvolkov239@gmail.com}

\begin{abstract}
Selberg-type integrals that can be turned into constant term identities
for Laurent polynomials arise naturally in conjunction with random matrix 
models in statistical mechanics. 
Built on a recent idea of Karasev and Petrov we develop
a general interpolation based method
that is powerful enough to establish many such identities in a simple manner.
The main consequence is the proof of a conjecture of Forrester related to the
Calogero--Sutherland model. In fact we prove a more general theorem,
which includes Aomoto's constant term identity at the same time.
We also demonstrate the relevance of the method in additive combinatorics.
\end{abstract}

\begin{keyword}
Aomoto's constant term identity\sep
Calogero--Sutherland model\sep
Combinatorial Nullstellensatz \sep
Erd\H os--Heilbronn conjecture \sep
Forrester's conjecture \sep
Hermite interpolation \sep
Selberg integral


\end{keyword}

\end{frontmatter}



\bigskip
\section{Introduction}
\label{1}

Perhaps the most famous constant term identity is the one associated with
the name of Freeman Dyson. In his seminal paper \cite{Dyson} dated back to
1962, Dyson proposed to replace Wigner's classical Gaussian-based random
matrix models by what now is known as the circular ensembles. The study of
their joint eigenvalue probability density functions led Dyson to the
following conjecture. Consider the family of Laurent polynomials
\[
\D(\bx;\ba):=\prod_{1\leq i \neq j \leq n}\left(1-\frac{x_i}{x_j}\right)^{a_i}
\]
parametrized by a sequence $\ba=(a_1,\dots,a_n)$ of nonnegative integers,
where $\bx=(x_1,\dots,x_n)$ is a sequence of indeterminates.
Denoting by $\CT[\L(\bx)]$ the constant term of the Laurent polynomial 
$\L=\L(\bx)$, Dyson's hypothesis can be formulated as the identity
\[
\CT[\D(\bx;\ba)]=\frac{(a_1+a_2+ \dots +a_n)!}{a_1!a_2! \dots a_n!}=:
\binom{\abs{\ba}}{\ba},
\]
where $\abs{\ba}=a_1+a_2+\dots+a_n$.
Using the shorthand notation $\D(\bx;k)$ for the equal parameter case
$\ba=(k,\dots,k)$, the constant term of $\D(\bx;k)$ for $k=1,2,4$ 
corresponds to the normalization factor of the partition function for the
circular orthogonal, unitary and symplectic ensemble, respectively.

Dyson's conjecture was confirmed by Gunson
[unpublished]\footnote{Gunson's proof is similar
to that of Wilson, cf. \cite{Dyson}.
A related conjecture of Dyson is proved by Gunson in \cite{Gunson}.} and Wilson 
\cite{Wilson} in the same year. The most 
elegant proof, based on Lagrange interpolation, is due to Good \cite{Good}.

Let $q$ denote yet another independent variable. In 1975
Andrews \cite{Andrews} suggested the following $q$-analogue 
of Dyson's conjecture: The constant term 
of the Laurent polynomial
\[
\D_q(\bx;\ba):=\prod_{1\leq i < j \leq n} 
\left(\frac{x_i}{x_j}\right)_{a_i}\left(\frac{qx_j}{x_i}\right)_{a_j}
\]
must be the $q$-multinomial coefficient
$$\qbin{\An}{\ba}:=\frac{\left(q\right)_{\abs{\ba}}}
{\left(q\right)_{a_1}\left(q\right)_{a_2}\dots\left(q\right)_{a_n}},$$
where $\big(t\big)_{k}= (1-t)(1-tq)\dots(1-tq^{k-1})$.  
Note that the slight asymmetry of the function $\D_q$ disappears when
one considers $\D=\D_1$; 
specializing at $q=1$, Andrews' conjecture gives back that of Dyson.

Despite several attempts \cite{Kadell1,Stanley1,Stanley2} the problem 
remained unsolved until 1985, when Zeilberger and Bressoud \cite{ZB} 
found a tour de force combinatorial proof; see also \cite{BG}.
Shorter proofs are due to
Gessel and Xin \cite{GX} and Cai \cite{Cai}. Recently
an idea of Karasev and Petrov \cite{KP} led to a very short proof 
by K\'arolyi and Nagy \cite{KN}, which we consider as a precursor to
the present paper. 
\medskip

Constant term identities like these and their generalizations are
intimately related to Selberg's integral formula \cite{Selberg}.
Colloquially referred to as the Selberg integral, it asserts
\begin{align*}
S_n(\alpha,\beta,\gamma)&:=
{\int}_0^1\dots{\int}_0^1 \prod_{i=1}^nt_i^{\alpha-1}(1-t_i)^{\beta-1}
\prod_{1\le i<j\le n}|t_i-t_j|^{2\gamma} dt_1\dots dt_n\\
&=\prod_{j=0}^{n-1}\frac{\Gamma(\alpha+j\gamma)\Gamma(\beta+j\gamma)
\Gamma(1+(j+1)\gamma)}{\Gamma(\alpha+\beta+(n+j-1)\gamma)\Gamma(1+\gamma)},
\end{align*}
where the complex parameters $\alpha,\beta,\gamma$ satisfy
\[
\Re(\alpha)>0,\ \Re(\beta)>0,\ \Re(\gamma)>-\min\{1/n, \Re(\alpha)/(n-1),
\Re(\beta)/(n-1)\}.
\]
The continued interest in the Selberg integral, demonstrated 
for example by the most
recent article \cite{Ostrovsky}, is due to its role in random matrix
theory, statistical mechanics, special function theory among other fields;
see the comprehensive exposition \cite{FW}.

The Selberg integral is well-known to be equivalent to Morris's constant
term identity \cite{Morris}
\begin{equation}
\CT\Bigg[ \prod_{j=1}^n(1-x_j)^a(1-1/x_j)^b \D(\bx;k) \Bigg]=
\prod_{j=0}^{n-1}\frac{(a+b+kj)!(kj+k)!}{(a+kj)!(b+kj)!k!},
\label{morris}
\end{equation}
or in a more compact form,
\begin{equation*}
\CT\left[ \M(\bx;a,b,k) \right] = M(n;a,b,k),
\end{equation*}
where the parameters $a,b,k$ are nonnegative integers. The equivalence
is established, via a suitable change of variables, by an application
of a theorem of Carlson \cite{Carlson} and the residue theorem.
This method can be employed to reduce Selberg-type integrals
to constant term identities.

Introducing an extra $t_1\cdots t_m$ factor into the integrand,
Aomoto \cite{Aomoto} in 1987 proved an extension of the Selberg integral.
Based on the fundamental theorem of calculus, it yields
besides Anderson's \cite{Anderson} one of the
simplest known proofs of the Selberg integral itself. Turned into a 
constant term identity, Aomoto's integral reads as
\begin{equation}
\CT\Bigg[ \prod_{j=1}^n(1-x_j)^{a+\chi(j\le m)}(1-1/x_j)^b \D(\bx;k) \Bigg]=
\prod_{j=0}^{n-1}\frac{(a+b+kj+\chi(j\ge n-m))!
(kj+k)!}{(a+kj+\chi(j\ge n-m))!(b+kj)!k!},
\label{aomoto}
\end{equation}
where $\chi(S)$ is equal to 1 if the statement $S$ is true and 0 otherwise.

\medskip
Intimately related to the theory of random matrices, in particular
the Dyson Brownian motion model \cite{Dyson2},
is the Calogero-Sutherland quantum many body system for
spinless quantum particles on the unit circle interacting via the
$1/r^2$ two-body potential, see \cite[Chapter 11]{Forrester2}.
Generalizations to include internal degrees of freedom of the particles
were formulated in the early 1990's. In his 1995 paper \cite{Forrester}
Forrester initiated the study of the analogue of the Selberg integral for
the corresponding exact multicomponent ground-state wavefunction.
Presented in the form of the constant term for the Laurent polynomial
\[
\For(\bx;n_0;a,b,k)=
\M(\bx;a,b,k) \prod_{n_0<i\ne j\le n}\left(1-\frac{x_i}{x_j}\right),
\]
the normalization factor for the most interesting two-component case 
can be determined by the conjectured identity
\begin{equation*}
\CT\left[ \For(\bx;n_0;a,b,k) \right] = M(n_0;a,b,k)\times
\prod_{j=0}^{n-n_0-1}\frac{(j+1)(a+b+kn_0+(k+1)j)!(kn_0+(k+1)j+k)!}{(a+kn_0+
(k+1)j)!(b+kn_0+(k+1)j)!k!}.
\end{equation*}
A $q$-analogue of this hypothesis which extends the $q$-Morris ex-conjecture 
\cite{Morris} was formulated and studied in \cite{BF}. Despite several further 
attempts \cite{Baratta,GLXZ,Hamada,Kaneko1,Kaneko2,Kaneko3,Kaneko4}, 
these conjectures have been resolved only in some particular cases.
The main achievement in the present paper is the proof of these identities,
and in a form that also includes Aomoto's formula (\ref{aomoto}); see Theorem
\ref{a-f} for the precise formulation. 
Along the way we develop a method 
with a wide range of possible applications, some of which are given as
instructive examples.

\medskip
A new proof of the Dyson conjecture given in \cite{KP} and the 
subsequent proof of the Zeilberger--Bressoud identity
presented in \cite{KN} are based on a quick application of 
the following explicit version of the
Combinatorial Nullstellensatz \cite{Alon} found independently by
Laso\'n \cite{Lason} and by Karasev and Petrov \cite{KP}.

\begin{lemma}
Let $\mathbb{F}$ be an arbitrary field and 
$F\in \mathbb{F}[x_1, x_2, \dots, x_n]$ a polynomial of degree
$\deg(F)\leq d_1+d_2+\dots+d_n$. 
For arbitrary subsets $C_1, C_2, \dots, C_n$ of $\mathbb{F}$ 
with $|C_i|=d_i+1$, the coefficient of $\prod x_i^{d_i}$ in $F$ is
$$ \sum_{c_1\in C_1} \sum_{c_2\in C_2} \dots \sum_{c_n\in C_n} 
\frac{F(c_1, c_2, \dots, c_n)}{\phi_1'(c_1)\phi_2'(c_2)\dots \phi_n'(c_n)},$$
where $\phi_i(z)= \prod_{c\in C_i}(z-c)$.
\label{interpol}
\end{lemma}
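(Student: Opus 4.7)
The plan is to reduce the multivariate identity to the classical univariate Lagrange interpolation formula, by exploiting linearity in the polynomial $F$.

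First I would observe that both sides of the claimed identity are $\mathbb{F}$-linear functionals of $F$, so it suffices to verify it when $F$ is a monomial $\prod_{i=1}^{n} x_i^{e_i}$ with $e_1+\dots+e_n\le d_1+\dots+d_n$. For such a monomial the displayed multiple sum factorizes across the $n$ variables, and the problem reduces to evaluating, for each $i$ separately, the univariate sums
\[
\sigma_k(C_i)\;:=\;\sum_{c\in C_i}\frac{c^{k}}{\phi_i'(c)}.
\]

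Next I would establish the key univariate identity $\sigma_k(C_i)=0$ for $0\le k<d_i$ and $\sigma_{d_i}(C_i)=1$. This is nothing but Lagrange interpolation: for any $f\in\mathbb{F}[z]$ with $\deg f\le d_i$, the Lagrange interpolant of $f$ at the $d_i+1$ nodes of $C_i$ coincides with $f$ itself, and reading off the leading coefficient yields $[z^{d_i}]\,f(z)=\sum_{c\in C_i}f(c)/\phi_i'(c)$. Specialising to $f(z)=z^k$ gives the stated values. (The same conclusion can be reached directly from the partial fraction expansion $1/\phi_i(z)=\sum_{c\in C_i}1/(\phi_i'(c)(z-c))$ by comparing coefficients in the Laurent expansion at infinity.)

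The proof is then concluded by a short case analysis on the exponent vector $(e_1,\dots,e_n)$. If some $e_i<d_i$, the corresponding factor $\sigma_{e_i}(C_i)$ vanishes and the whole product is zero. Otherwise $e_i\ge d_i$ for every $i$, and combined with $\sum e_i\le\sum d_i$ this forces $e_i=d_i$ for all $i$, in which case every factor equals $1$ and the product equals $1$; this matches precisely the coefficient of $\prod x_i^{d_i}$ in the monomial $\prod x_i^{e_i}$. I do not expect any real obstacle beyond this bookkeeping. The only subtlety worth flagging is that $\sigma_k(C_i)$ does not vanish for $k>d_i$ — in fact it equals the complete homogeneous symmetric function $h_{k-d_i}$ in the elements of $C_i$ — so the degree hypothesis $\deg F\le\sum d_i$ is essential precisely to rule out such potentially problematic monomials.
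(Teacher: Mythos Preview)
Your argument is correct and is essentially the same as the paper's: both reduce the multivariate claim to the univariate Lagrange interpolation fact that $\sum_{c\in C_i} c^k/\phi_i'(c)$ equals $0$ for $k<d_i$ and $1$ for $k=d_i$, and both exploit the product/tensor structure to pass from one variable to many. The only difference is packaging: you check monomials directly and factor the sum by hand, whereas the paper encodes the same step abstractly via Lemmas~\ref{nonsense} and~\ref{nonsense2} (identifying $\mathbb{F}[x_1,\dots,x_n]$ with $\bigotimes_i\mathbb{F}[x_i]$ and writing the Lagrange functional as $\otimes_i\eta_i$). That abstraction buys nothing extra for this lemma, but it is set up so that the identical argument, with Lagrange replaced by Hermite interpolation, yields the multiset extension in Theorem~\ref{H-interpol}.
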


\noindent
One principal aim of the present paper is to turn this idea into a method,
which has the power to reduce 
seemingly difficult evaluations to simple combinatorial problems.
To this end, in the next section we present a somewhat abstract framework,
which allows us to extend the previous lemma to multisets via Hermite 
interpolation. In Section 3 we demonstrate the strength of the method
in additive combinatorics by providing a new proof of an extension of 
the Erd\H os--Heilbronn conjecture, which is devoid of the heavy technical
details that were needed previously. This is followed in Section 4 by
an application to a problem of Kadell \cite{Kadell2} in algebraic
combinatorics, where the amount of reduction of former complexities is
even more voluminous. In Section 5, which can be viewed as a prelude
to the main result, we reestablish (\ref{morris}) using
our method, thereby giving a short proof of the Selberg integral itself.
Besides formulating our main result, in Section 6 we point out how a 
slight modification yields, modulo some routine computation, a one-page
derivation of the $q$-Morris identity. The same
idea with more delicate combinatorics leads to the solution of the problem
of Forrester in the concluding section. Finally we mention that the method
developed here can be successfully applied to prove Kadell's orthogonality
conjectures \cite{Kadell3}, see \cite{KLW}.

\bigskip
\section{On the Combinatorial Nullstellensatz}
\label{2}

Alon's Nullstellensatz \cite{Alon} describes effectively the structure 
of polynomials which vanish on a finite Cartesian product over an arbitrary
field. It implies the following non-vanishing criterion. Let $F$ be a 
polynomial as in Lemma \ref{interpol}. If the coefficient of $\prod x_i^{d_i}$ 
in $F$ is non-zero, then $F$ cannot vanish on a set $C_1\times C_2\times\dots
\times C_n$, where $|C_i|>d_i$ for every $i$. Note that this is also an
immediate consequence of  Lemma \ref{interpol}. A standard application
of the polynomial method to prove a combinatorial hypothesis works as follows.
Assuming the falsity of the hypothesis, build a polynomial whose values are
all zero over a large Cartesian product, then compute the coefficient of 
the appropriate leading term. If that coefficient is not zero, the criterion
leads to the desired contradiction. The difficulty often lies in the
computation of that coefficient. This is where the power of Lemma 
\ref{interpol} comes into the picture, which is clearly demonstrated in the
next section. An extension of the non-vanishing criterion for the case
when $C_i$ are multisets, along with some applications, was obtained
recently by K\'os and R\'onyai \cite{KR}; see also \cite{KMR}.
Here we generalize Lemma \ref{interpol} in a similar spirit.
\medskip

Let $\V_1,\dots,\V_n$ be vector spaces over the same field $\F$.
For each $i$, fix a basis $\B_i$ in $\V_i$ and fix the corresponding basis
$\otimes \B_i$ in the tensor product space $\otimes \V_i$.
Consider arbitrary non-empty subsets $A_i\subseteq \B_i$, labelled vectors
$a_i\in A_i$, and linear functionals $\eta_i\in \Hom(\V_i,\F)$ that
satisfy the conditions $\eta_i(a_i)=1$ and
$\eta_i(b)=0$ for every $b\in A_i\setminus\{a_i\}$.
Our tool will be the following straightforward observation.

\begin{lemma}
Assume that the tensor $F\in \otimes \V_i$ 
satisfies the following condition: if $b_i\in {\B}_i$
and the coordinate of $F$ at $\otimes b_i$ does not vanish,
then either $b_i=a_i$ for every $i$ or $b_i\in A_i\setminus \{a_i\}$
for at least one index $i$.
Then the coordinate of $F$ at $\otimes a_i$ equals $(\otimes \eta_i)(F)$.
\hfill \qed
\label{nonsense}
\end{lemma}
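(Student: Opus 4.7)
My plan is to expand $F$ in the chosen tensor basis and evaluate $\otimes \eta_i$ term by term, then observe that the hypothesis forces all but one contribution to vanish. Concretely, I would write $F = \sum c_{b_1,\dots,b_n}\, b_1 \otimes \cdots \otimes b_n$ as a sum over tuples with $b_i \in \B_i$, and apply linearity to obtain $(\otimes \eta_i)(F) = \sum c_{b_1,\dots,b_n} \prod_{i=1}^n \eta_i(b_i)$.

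The heart of the argument is a case split on the surviving tuples. For any tuple whose coefficient $c_{b_1,\dots,b_n}$ is nonzero, the hypothesis imposes a dichotomy: either $b_i = a_i$ for every $i$, contributing $\prod_i \eta_i(a_i) = 1$; or there exists an index $j$ with $b_j \in A_j \setminus \{a_j\}$, in which case $\eta_j(b_j) = 0$ annihilates the product. Consequently only the term indexed by $\otimes a_i$ survives, and its contribution is exactly $c_{a_1,\dots,a_n}$, which is the coordinate of $F$ at $\otimes a_i$.

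I do not anticipate any real obstacle --- the authors flag the statement with \qed in the display --- since the conditions on $\eta_i$ have been engineered precisely to make this bookkeeping go through. The only conceptual subtlety, and the reason the lemma is stated so abstractly, is that $\eta_i$ is constrained \emph{only} on $A_i$ and remains entirely free on $\B_i \setminus A_i$; in particular, the $\eta_i$ are highly non-unique. This freedom is the real lever: in subsequent sections, concrete choices of $\V_i$ (typically spaces of polynomials of bounded degree), bases $\B_i$ adapted to interpolation nodes, and Hermite-type functionals $\eta_i$ will recover Lemma~\ref{interpol} and extend it to the multiset (confluent) setting needed for the main applications.
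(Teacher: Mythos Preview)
Your proof is correct and is exactly the straightforward verification the authors have in mind; indeed, the paper gives no proof at all, merely placing a \qed\ after the statement. Your expansion in the tensor basis followed by the case split on nonvanishing tuples is the canonical argument, and your closing remark about the freedom in choosing the $\eta_i$ correctly anticipates how the lemma is exploited downstream.
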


\noindent
We will apply this lemma in the following situation: 
\[
\V_i=\F[x_i],\quad \B_i=\{1,x_i,x_i^2,\dots\},\quad 
A_i=\{1,x_i,\dots,x_i^{d_i}\},\quad a_i=x_i^{d_i}.
\]
Moreover we will assume that the value of $\eta_i\in \Hom(\V_i,\F)$
at $f\in \F[x_i]$ is the same as the coefficient of $x_i^{d_i}$ in $f$ if
$\deg(f)\le d_i$.
Now $\F[x_1,\dots,x_n]$, as a vector space over $\F$, can be identified
with $\otimes \V_i$ via the unique isomorphism, which extends the
correspondence
\[
x_1^{k_1}\dots x_n^{k_n} \longleftrightarrow x_1^{k_1}\otimes\dots\otimes x_n^{k_n},
\qquad k_i\in\{0,1,2\dots\}.
\]
An important feature of this identification is the following. 

\begin{lemma}
Assume that linear functionals $\vartheta_i\in \Hom(\V_i,\F)$
are given in the form $\vartheta_i(f)=f^{(m_i)}(c_i)$ for some elements
$c_i\in \F$ and nonnegative integers $m_i$. Then for any polynomial
$G\in \F[x_1,\dots,x_n]$,
\[
(\otimes\vartheta_i)(G)=\frac{\partial^{m_1+\dots+m_n}G}{\partial x_1^{m_1}\dots
\partial x_n^{m_n}}(c_1,\ldots,c_n).
\]
\label{nonsense2}
\end{lemma}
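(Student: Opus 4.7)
The plan is to prove the identity by reducing it to a tautological check on the basis monomials that define the identification $\F[x_1,\dots,x_n]\cong\otimes \V_i$. Both sides of the asserted equation are $\F$-linear in the polynomial $G$, so it suffices to verify that they agree on each basis element $x_1^{k_1}\cdots x_n^{k_n}$, with $k_i\ge 0$.

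First I would invoke the defining property of the tensor product of linear functionals on decomposable tensors: applied to $x_1^{k_1}\otimes\cdots\otimes x_n^{k_n}$, the functional $\otimes\vartheta_i$ returns the product $\prod_{i=1}^{n}\vartheta_i(x_i^{k_i})=\prod_{i=1}^{n}\bigl.\tfrac{d^{m_i}}{dx^{m_i}}x^{k_i}\bigr|_{x=c_i}$. On the other hand, formal partial derivatives in distinct variables commute and act independently on the factors of a pure monomial, so
\[
\frac{\partial^{m_1+\cdots+m_n}}{\partial x_1^{m_1}\cdots\partial x_n^{m_n}}(x_1^{k_1}\cdots x_n^{k_n})\bigg|_{(c_1,\ldots,c_n)}
=\prod_{i=1}^{n}\bigl.\tfrac{d^{m_i}}{dx^{m_i}}x^{k_i}\bigr|_{x=c_i}.
\]
The two quantities coincide term by term, so linearity in $G$ finishes the argument.

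There is no genuine obstacle here: the lemma is essentially a sanity check that the fixed isomorphism between $\F[x_1,\dots,x_n]$ and $\otimes\V_i$ intertwines the operation ``take iterated partial derivatives and evaluate at a point'' on the polynomial side with ``tensor the corresponding one-variable evaluation functionals'' on the tensor side. The only point requiring mild care is that all derivatives are understood formally, which is harmless since the monomial-by-monomial identity above is purely algebraic and holds over any field, regardless of characteristic.
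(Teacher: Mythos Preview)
Your proof is correct and follows essentially the same approach as the paper: both verify the identity on basis monomials $x_1^{k_1}\cdots x_n^{k_n}$ by direct computation of each side as $\prod_i \left.\frac{d^{m_i}}{dx^{m_i}}x^{k_i}\right|_{x=c_i}$, then extend by linearity. The paper's version just writes out the falling factorial explicitly, but the argument is the same.
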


\begin{proof}
Indeed, identifying $\F\otimes\dots\otimes\F$ with $\F$ via
the correspondence $\alpha_1\otimes\dots\otimes\alpha_n \longleftrightarrow
\alpha_1\dots\alpha_n$,
for any monomial $G=x_1^{k_1}\dots x_n^{k_n}\in \otimes\B_i$ we obtain 
\[
(\otimes\vartheta_i)(G)=
\otimes \left(\vartheta_i(x_i^{k_i})\right)=
{\prod}_{i=1}^n k_i(k_i-1)\dots(k_i-m_i+1)\cdot \left( \otimes c_i^{k_i-m_i}\right)
=\frac{\partial^{m_1+\dots+m_n}G}{\partial x_1^{m_1}\dots\partial x_n^{m_n}}
(c_1,\dots,c_n).
\]
The general statement follows by linearity.
\end{proof}

Let $F\in  \F[x_1,\dots,x_n]$. We say that no monomial majorizes
$\prod x_i^{d_i}$ in $F$ if every monomial $\prod x_i^{k_i}$ with a non-zero
coefficient in $F$ satisfies either $k_i=d_i$ for every $i$ or $k_i<d_i$
for some $i$. This is certainly the case if
$\deg(F)\le d_1+\dots+d_n$.
Such a polynomial $F$ obviously satisfies the condition
in Lemma \ref{nonsense}. As a warm-up exercise we reestablish Lemma
\ref{interpol} in a slightly stronger form using this language.

\begin{theorem}
Let $F\in  \F[x_1,\dots,x_n]$ be a polynomial such that
no monomial majorizes $M=\prod x_i^{d_i}$ in $F$.
Let $C_1,\dots,C_n$ be arbitrary subsets of $\F$ such that $|C_i|=d_i+1$
for every $i$. Then the coefficient of $M$ in $F$ can be evaluated as
\[
\sum_{c_1\in C_1} \sum_{c_2\in C_2} \dots \sum_{c_n\in C_n} 
\prod_{i=1}^n \kappa(C_i,c_i) {F(c_1, c_2, \dots, c_n)},
\]
where $\kappa(C_i,c_i)=\left(\prod_{c\in C_i\setminus\{ c_i\}}
(c_i-c)\right)^{-1}$. 
Consequently, if the above coefficient is not zero, then there exists a system
of representatives $c_i\in C_i$ such that $F(c_1, c_2, \dots, c_n)\ne 0$.
\label{L-interpol}
\end{theorem}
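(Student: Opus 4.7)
The plan is to realize the claim as a direct instance of the abstract framework provided by Lemma~\ref{nonsense}. Specialize as suggested in the paper: let $\V_i = \F[x_i]$, $\B_i = \{1, x_i, x_i^2, \ldots\}$, $A_i = \{1, x_i, \ldots, x_i^{d_i}\} \subseteq \B_i$, and $a_i = x_i^{d_i}$. Under the identification of $\otimes \V_i$ with $\F[x_1, \ldots, x_n]$, the hypothesis that no monomial majorizes $M = \prod x_i^{d_i}$ in $F$ is precisely the hypothesis of Lemma~\ref{nonsense}: if the coordinate of $F$ at a basis tensor $\otimes x_i^{k_i}$ is nonzero, then either $k_i = d_i$ for every $i$, or $x_i^{k_i} \in A_i \setminus \{a_i\}$ for at least one index $i$.

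Next I would produce the linear functionals $\eta_i \in \Hom(\V_i, \F)$ required by Lemma~\ref{nonsense}, with the natural candidate
\[
\eta_i(f) = \sum_{c \in C_i} \kappa(C_i, c) f(c).
\]
To verify $\eta_i(x_i^{d_i}) = 1$ and $\eta_i(x_i^j) = 0$ for $0 \le j < d_i$, I would invoke the classical Lagrange interpolation identity: for any $f \in \F[x_i]$ with $\deg(f) \le d_i$, the unique polynomial of degree at most $d_i$ interpolating $f$ at the $d_i + 1$ points of $C_i$ is $f$ itself, so the coefficient of $x_i^{d_i}$ in $f$ is $\sum_{c \in C_i} \kappa(C_i, c) f(c)$. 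Applied to $f = x_i^j$ for $0 \le j \le d_i$, this yields exactly the required values of $\eta_i$.

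With these $\eta_i$ in hand, Lemma~\ref{nonsense} identifies the coefficient of $M$ in $F$ with $(\otimes \eta_i)(F)$. To unpack this into the stated sum, expand each $\eta_i$ as a linear combination of the point-evaluation functionals $\vartheta_i^{c} : f \mapsto f(c)$ for $c \in C_i$, weighted by $\kappa(C_i, c)$. By multilinearity of the tensor product,
\[
(\otimes \eta_i)(F) = \sum_{c_1 \in C_1} \cdots \sum_{c_n \in C_n} \left(\prod_{i=1}^n \kappa(C_i, c_i)\right) \bigl(\otimes \vartheta_i^{c_i}\bigr)(F).
\]
Lemma~\ref{nonsense2}, applied with all $m_i = 0$, gives $\bigl(\otimes \vartheta_i^{c_i}\bigr)(F) = F(c_1, \ldots, c_n)$, which yields the claimed formula. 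The ``consequently'' clause is then immediate: if the displayed sum is nonzero, at least one of its summands is nonzero, forcing $F(c_1, \ldots, c_n) \ne 0$ for some system of representatives $c_i \in C_i$. There is no real obstacle; the only nontrivial input is the one-line Lagrange identity underlying the construction of $\eta_i$.
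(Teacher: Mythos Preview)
Your proposal is correct and follows essentially the same approach as the paper: define $\eta_i(f)=\sum_{c\in C_i}\kappa(C_i,c)f(c)$, verify via Lagrange interpolation that $\eta_i$ extracts the coefficient of $x_i^{d_i}$ on polynomials of degree at most $d_i$, and then apply Lemmas~\ref{nonsense} and~\ref{nonsense2} (the latter with all $m_i=0$) together with multilinearity. The paper's proof is just a terser version of what you wrote.
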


\begin{proof}
Define the linear functionals $\eta_i\in \Hom(\V_i,\F)$ by
$\eta_i(f)={\sum}_{c_i\in C_i}\kappa(C_i,c_i)f(c_i)$. According to the Lagrange
interpolation formula, $\eta_i(f)$ is equal to the coefficient of 
$x_i^{d_i}$ in $f$ for any $f\in \F[x_i]$ with $\deg(f)\le d_i$.
Since each $\eta_i$ is a linear combination of linear functionals of the
form $\vartheta_i(f)=f^{(0)}(c_i)$, the claim follows easily from
Lemmas \ref{nonsense} and \ref{nonsense2}.
\end{proof}

Extending the notion of the 0/1-valued characteristic function of a set, 
a finite multiset $C$ in $\F$ can be represented by a multiplicity 
function $\omega:\F\to \{0,1,2,\dots\}$ with finite sum 
$|C|:=\sum_{x\in \F} \omega(x)$. We denote by 
$\supp(C):=\{c\in \F\ |\ \omega(c)\ne 0\}$ the supporting set of $C$
and, with a slight abuse of notation, write $c\in C$ if $c\in \supp(C)$. 
A finite union of multisets
is understood as the sum of the corresponding multiplicity functions.
An appropriate generalization of Theorem \ref{L-interpol} for multisets
can be formulated as follows.

\begin{theorem}
Let $F\in  \F[x_1,\dots,x_n]$ be a polynomial such that
no monomial majorizes $M=\prod x_i^{d_i}$ in $F$.
Let $C_1,\dots,C_n$ be arbitrary multisets in $\F$ with corresponding
multiplicity functions $\omega_1,\dots,\omega_n$ such that $|C_i|=d_i+1$
for every $i$. Assume that either $\char(\F)=0$ or $\char(\F)\ge\omega_i(c)$
for every index $i$ and $c\in \F$.
Then the coefficient of $M$ in $F$ can be evaluated as
\[
[M]F=\sum_{c_1\in{C_1}}\sum_{m_1<\omega_1(c_1)}\dots
\sum_{c_n\in{C_n}}
\sum_{m_n<\omega_n(c_n)}
\prod_{i=1}^n \kappa(C_i,c_i,m_i) 
\frac{\partial^{m_1+\dots+m_n}F}{\partial x_1^{m_1}\dots
\partial x_n^{m_n}}(c_1,\ldots,c_n),
\]
where 
\[
\kappa(C_i,c_i,m_i)=\frac{1}{m_i! \cdot (\omega_i(c_i)-1-m_i)!} \cdot 
\left.
\left(\frac{1}{\prod_{c \in {C_i} \setminus\{c_i\}}(x-c)^{\omega_i(c)}}
\right)^{(\omega_i(c_i)-1-m_i)}  
\right|_{x=c_i}\ .
\]
Consequently, if $[M]F\ne 0$,
then there exists a system
of representatives $c_i\in C_i$ and multiplicities $m_i<\omega_i(c_i)$
such that 
\[
\frac{\partial^{m_1+\dots+m_n}F}{\partial x_1^{m_1}\dots
\partial x_n^{m_n}}(c_1,\ldots,c_n)\ne 0.
\]
\label{H-interpol}
\end{theorem}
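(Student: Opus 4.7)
The plan is to replicate the proof of Theorem \ref{L-interpol}, replacing ordinary Lagrange interpolation with its Hermite generalization. Exactly as in the remarks preceding Lemma \ref{nonsense2}, I set $\V_i=\F[x_i]$, $\B_i=\{1,x_i,x_i^2,\dots\}$, $A_i=\{1,x_i,\dots,x_i^{d_i}\}$, and $a_i=x_i^{d_i}$. The hypothesis that no monomial majorizes $M=\prod x_i^{d_i}$ in $F$ is precisely the assumption required by Lemma \ref{nonsense}, so the task reduces to constructing linear functionals $\eta_i\in\Hom(\V_i,\F)$ such that $\eta_i(x_i^{d_i})=1$ and $\eta_i(x_i^k)=0$ for $0\le k<d_i$, and that can moreover be written as $\F$-linear combinations of the point-derivative functionals $f\mapsto f^{(m)}(c)$, so that Lemma \ref{nonsense2} can be invoked coordinate-wise.

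For the construction, set $\Phi_i(x):=\prod_{c\in\supp(C_i)}(x-c)^{\omega_i(c)}$, a monic polynomial of degree $|C_i|=d_i+1$. For any $f\in\F[x_i]$ of degree at most $d_i$, the proper rational function $f(x)/\Phi_i(x)$ satisfies
\[
[x_i^{d_i}]f=\sum_{c\in\supp(C_i)}\underset{x=c}{\mathrm{Res}}\,\frac{f(x)}{\Phi_i(x)},
\]
as the coefficient of $1/x$ in its Laurent expansion at infinity coincides with the leading coefficient of $f$. Applying the standard formula for a residue at a pole of order $\omega_i(c)$ and expanding the resulting $(\omega_i(c)-1)$-th derivative by Leibniz's rule against $f(x)$ and $h(x):=\prod_{c'\ne c}(x-c')^{-\omega_i(c')}$ gives
\[
\underset{x=c}{\mathrm{Res}}\,\frac{f(x)}{\Phi_i(x)}=\sum_{m=0}^{\omega_i(c)-1}\frac{h^{(\omega_i(c)-1-m)}(c)}{m!\,(\omega_i(c)-1-m)!}\,f^{(m)}(c)=\sum_{m=0}^{\omega_i(c)-1}\kappa(C_i,c,m)\,f^{(m)}(c),
\]
with $\kappa$ precisely as in the statement. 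The characteristic hypothesis is exactly what makes each of the factorials $m!$ and $(\omega_i(c)-1-m)!$ invertible in $\F$. Defining $\eta_i(f)$ to be the resulting double sum over $c\in\supp(C_i)$ and $0\le m<\omega_i(c)$ yields a functional with all the required properties.

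With the $\eta_i$ in hand, Lemma \ref{nonsense} gives $[M]F=(\otimes\eta_i)(F)$. Expanding each $\eta_i$ as an $\F$-linear combination of evaluation-of-derivative functionals and applying Lemma \ref{nonsense2} term by term converts this into precisely the iterated sum of mixed partial derivatives of $F$ displayed in the statement. The ``consequently'' clause is then immediate, since a nonzero sum must contain at least one nonzero summand, supplying representatives $c_i\in C_i$ and multiplicities $m_i<\omega_i(c_i)$ of the required sort. The main substantive step is the Leibniz bookkeeping that identifies $\kappa(C_i,c_i,m_i)$ in its announced closed form; I expect this to be the only nontrivial verification, and if desired one can sidestep the residue formalism entirely by checking the identity for $\eta_i$ purely algebraically on the basis $\{1,x_i,\dots,x_i^{d_i}\}$, which also makes the role of the characteristic condition transparent.
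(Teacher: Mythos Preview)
Your proof is correct and follows essentially the same strategy as the paper: reduce to constructing functionals $\eta_i$ that read off the top coefficient on $\F[x_i]_{\le d_i}$, express them as linear combinations of point-derivative evaluations, and then invoke Lemmas~\ref{nonsense} and~\ref{nonsense2}. The only cosmetic difference is that the paper builds $\eta_i$ from the Hermite interpolation basis obtained via the Chinese Remainder Theorem and then extracts $\kappa(C_i,c_i,m_i)$ by a Taylor expansion, whereas you reach the same $\kappa$ directly through partial fractions (phrased as residues) and Leibniz; the two computations are dual and the characteristic hypothesis enters at the same spot in each.
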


\noindent
{\em Remarks.} {\em 1.} We tacitly assume that the $m_i$'s are nonnegative 
integers. 
{\em 2.} When
each $\omega_i$ is a 0/1-valued function, the statement reduces to Theorem
\ref{L-interpol}.
{\em 3.} It is possible to derive this result, in a slightly weaker 
form, from the earlier works of K\'os et al. \cite{KMR,KR}. We preferred
this more direct approach.

\begin{proof}
To construct the linear functionals $\eta_i$ we replace Lagrange 
interpolation by Hermite interpolation. For $c_i\in {C_i}$,
$0\le m_i<\omega_i(c_i)$, let 
$g(C_i,c_i,m_i)$ denote the unique polynomial of degree less than $|C_i|$,
provided by the Chinese Remainder Theorem, to the system of simultaneous
congruences
\[
g(C_i,c_i,m_i)(x_i)\equiv (x_i-c_i)^{m_i}/m_i! \pmod{(x_i-c_i)^{\omega_i(c_i)}},
\]
\[
g(C_i,c_i,m_i)(x_i)\equiv 0 \pmod{(x_i-c)^{\omega_i(c)}}\quad 
(c\in {C_i}\setminus\{c_i\})
\]
in $\V_i$. That is, $g(C_i,c_i,m_i)$ is the unique polynomial 
$g\in \F[x_i]$ of 
degree less than or equal to $d_i$, which satisfies $g^{(m_i)}(c_i)=1$ and 
$g^{(m')}(u)=0$ otherwise if $m'<\omega_i(u)$, $u\in \F$. Denote by 
$\kappa(C_i,c_i,m_i)$ the coefficient of $x_i^{d_i}$ in $g(C_i,c_i,m_i)$. Then
Lemmas  \ref{nonsense} and \ref{nonsense2} can be applied as before
for the linear functionals $\eta_i\in \Hom(\V_i,\F)$ given by
\[
\eta_i(f)=\sum_{c_i\in {C_i}}\sum_{m_i<\omega_i(c_i)} \kappa(C_i,c_i,m_i) 
f^{(m_i)}(c_i).
\]
To compute the coefficients $\kappa(C_i,c_i,m_i)$, write 
$p_i(x_i)=\prod_{c\in {C_i}\setminus\{c_i\}}(x_i-c)^{\omega_i(c)}$. That is,
there exist polynomials $h_i,r_i\in\F[x_i]$ with $\deg(h_i)<\omega_i(c_i)$ and
$\deg(r_i) \le d_i-\omega_i(c_i)$ such that
\[
h_i(x_i)=\frac{g(C_i,c_i,m_i)(x_i)}{p_i(x_i)}=\frac{(x_i-c_i)^{m_i}}{m_i!p_i(x_i)}+
(x_i-c_i)^{\omega_i(c_i)}\frac{r_i(x_i)}{p_i(x_i)}
\]
with $\kappa(C_i,c_i,m_i)$ being the coefficient of $x_i^{\omega_i(c_i)-1}$ in
$h_i(x_i)$. Expanding both the left- and the right-hand side as a 
formal power series in the variable $x_i-c_i$ one finds that 
$\kappa(C_i,c_i,m_i)$
is the coefficient of $(x_i-c_i)^{\omega_i(c_i)-m_i-1}$ in $1/(m_i!p_i(x_i))$. 
The result follows by an application of Taylor's formula. 
\end{proof}

\bigskip
\section{An application to additive theory}
\label{3}

Let $\S=\{S_{ij}\mid 1\le i<j\le n\}$ be a family of subsets 
of the cyclic group $\Z_p:=\Z/p\Z$ of prime order $p$.
For a collection of sets $A_1,\dots,A_n\subseteq \Z_p$, consider the
following restricted sumset:
\[
{\bigwedge}_{\S}A_i=\left\{a_1+\dots+a_n\mid a_i\in A_i,\ a_j-a_i\not\in S_{ij}\ 
\mathrm{for}\ i<j\right\}.
\]
For the special case when $A_i\equiv A$ and $S_{ij}\equiv\{0\}$, Dias da
Silva and Hamidoune \cite{DH} proved 
\[
\left| {\bigwedge}_{\S}A_i \right| \ge \min \left\{p, n|A|-n^2+1\right\},
\]
thus establishing a long-standing conjecture of Erd\H os and Heilbronn
\cite{EG}.
Their proof exploited the properties of cyclic spaces of derivations on
exterior product spaces and the representation theory of symmetric groups;
see \cite{ANR} for another proof based on the polynomial method.
A far reaching generalization was obtained by Hou and Sun \cite{HS}. 
Here we use Lemma \ref{interpol} to reestablish their result in a short and
elegant manner, thereby also providing a simplified proof to the
Dias da Silva--Hamidoune theorem. Note that although our formulation below is
slightly different, it is still equivalent to \cite[Theorem 1.1]{HS}.

\begin{theorem}
Let $A_1,\dots,A_n$ be subsets of a field $\F$ such that $|A_i|=k$
for $1\le i\le n$ and assume that $S_{ij}\subseteq \F$ satisfy $|S_{ij}|\le s$
for $1\le i<j\le n$. If either $\char(\F)=0$ or 
\[
\char(\F)>\max\left\{n\lceil s/2\rceil,n(k-1)-n(n-1) \lceil s/2\rceil\right\},
\]
then
\[
\left| {\bigwedge}_{\S}A_i \right| \ge n(k-1)-n(n-1) \lceil s/2\rceil+1.
\]
\label{hou-sun}
\end{theorem}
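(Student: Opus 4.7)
The plan is to prove the theorem by contradiction, following the polynomial-method template but exploiting the coefficient formula of Theorem \ref{L-interpol} at both ends. First, a padding reduction: replacing each $S_{ij}$ by an arbitrary superset of size exactly $2r$ (where $r=\lceil s/2\rceil$) can only shrink the restricted sumset while preserving $\lceil s/2\rceil$, so I may assume $|S_{ij}|=2r$ for every $i<j$. Suppose for contradiction that $|{\bigwedge}_\S A_i|\le n(k-1)-n(n-1)r$, and fix $E\supseteq {\bigwedge}_\S A_i$ with $|E|=n(k-1)-n(n-1)r$. Consider
\[
F(\bx) \,=\, \prod_{e\in E}(x_1+\dots+x_n-e)\,\prod_{1\le i<j\le n}\prod_{t\in S_{ij}}(x_j-x_i-t).
\]
Its degree is $|E|+2r\binom{n}{2}=n(k-1)$, and it vanishes on $A_1\times\dots\times A_n$: given $(a_1,\dots,a_n)$ in this product, either $a_j-a_i\in S_{ij}$ for some $i<j$ (killing a factor of the second product), or else $a_1+\dots+a_n$ lies in ${\bigwedge}_\S A_i\subseteq E$ (killing a factor of the first).

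Set $M=\prod_i x_i^{k-1}$. Since $\deg F=\deg M=n(k-1)$, no monomial majorizes $M$ in $F$, so Theorem \ref{L-interpol} applies with $C_i=A_i$. Every summand in the resulting formula for $[M]F$ then vanishes, yielding $[M]F=0$. On the other hand, $[M]F$ coincides with the coefficient of $M$ in the top-degree homogeneous part
\[
L(\bx)\,=\,\Bigl(\sum_{i=1}^n x_i\Bigr)^{|E|}\,\prod_{1\le i<j\le n}(x_j-x_i)^{2r}.
\]
The desired contradiction will follow once I establish that $[M]L\ne 0$.

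To evaluate $[M]L$, I would invoke the algebraic identity $\prod_{i<j}(x_j-x_i)^{2r} = (-1)^{r\binom{n}{2}}\prod_j x_j^{r(n-1)}\,\D(\bx;r)$, which reduces the task to computing the coefficient of $\prod_j x_j^{k-1-r(n-1)}$ in $\bigl(\sum_i x_i\bigr)^{|E|}\,\D(\bx;r)$. Expanding $\bigl(\sum_i x_i\bigr)^{|E|}$ via the multinomial theorem, this becomes a finite linear combination of Laurent coefficients of $\D(\bx;r)$ weighted by multinomial coefficients, in which Dyson's identity $\CT\D(\bx;r)=(nr)!/(r!)^n$ governs the term indexed by the zero exponent vector. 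Proving that the whole sum is nonzero is the main technical obstacle; I expect to address this either by a second application of Theorem \ref{L-interpol} applied directly to $L$ with a clever choice of interpolation sets --- such as $C_i=\{0,1,\dots,k-1\}$, exploiting the $\Symm_n$-symmetry of the resulting multisum to reduce it to a signed sum indexed by $n$-subsets --- or by a direct combinatorial expansion. The hypothesis $\char(\F)>\max\{nr,\,n(k-1)-n(n-1)r\}$ is exactly what is needed to ensure that the factorials and multinomial coefficients appearing in the resulting expression do not vanish in $\F$.
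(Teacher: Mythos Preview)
Your setup --- the padding to $|S_{ij}|=2r$, the contradiction hypothesis, the polynomial $F$ vanishing on $A_1\times\dots\times A_n$, and the passage to the homogeneous top part $L$ --- is correct and matches the paper's argument exactly. The gap is in the evaluation of $[M]L$.

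Neither of your two suggested routes closes easily. Expanding $(\sum_i x_i)^{|E|}$ multinomially and invoking Dyson's identity accounts for only the single term where the exponent vector is zero; the remaining terms involve \emph{non-constant} Laurent coefficients of $\D(\bx;r)$, which have no simple closed form and do not obviously cancel. Applying Theorem~\ref{L-interpol} directly to $L$ with $C_i=\{0,\dots,k-1\}$ and using the $\Symm_n$-symmetry of $L$ still leaves a large signed sum over integer $n$-tuples with no evident collapse.

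The trick you are missing is this: rather than interpolating $L$ itself, replace $L$ by a polynomial $\widetilde F$ with the \emph{same} top-degree homogeneous part (hence $[M]\widetilde F=[M]L$) but whose \emph{values} on $\{0,\dots,k-1\}^n$ are almost all zero. Concretely, take
\[
\widetilde F(\bx)=\prod_{e=\binom{n}{2}r+1}^{\,n(k-1)-\binom{n}{2}r}\Bigl(\sum_i x_i-e\Bigr)\times\prod_{1\le i<j\le n}\ \prod_{e=-r}^{\,r-1}(x_j-x_i-e).
\]
Each linear factor contributes the same leading term as in $L$, so $[M]\widetilde F=[M]L$. But on the grid, $\widetilde F(\bc)\ne 0$ forces $|c_j-c_i|\ge r$ for all $i<j$, hence $\binom{n}{2}r\le\sum c_i\le n(k-1)-\binom{n}{2}r$; combined with the first product this pins $\sum c_i=\binom{n}{2}r$, so $\{c_1,\dots,c_n\}=\{0,r,\dots,(n-1)r\}$. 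The asymmetric range $-r\le e\le r-1$ in the second product then forces the natural order $c_i=(i-1)r$. Thus the interpolation sum for $[M]\widetilde F$ collapses to a single term, an explicit product of integers each of absolute value at most $\max\{nr,\,n(k-1)-n(n-1)r\}$. This is precisely where the characteristic hypothesis is used, and it yields $[M]L\ne 0$.
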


\begin{proof}
Since posing extra restrictions cannot increase the size of the sumset,
we will assume that $s$ is even and $|S_{ij}|=s=2t$ holds for every pair $i<j$.
We may also assume that $k-1\ge (n-1)t$. We proceed by
way of contradiction. Suppose that $\bigwedge_\S A_i$ is contained in a  
set $C$ of size $n(k-1)-n(n-1)t$, and consider the polynomial
\[
\prod_{e\in C}(x_1+\dots+x_n-e)\times \prod_{i<j}
\left(\prod_{e\in S_{ij}}(x_j-x_i-e)\right) .
\]
This polynomial of degree $n(k-1)$ vanishes on the Cartesian product
$A_1\times\dots\times A_n$. According to Lemma \ref{interpol}, the coefficient
of the monomial $\prod x_i^{k-1}$ must be zero. This coefficient remains the
same if we slightly modify the polynomial and consider
\[
{F}(\bx)=\prod_{e=\binom{n}{2}t+1}^{n(k-1)-\binom{n}{2}t}(x_1+\dots+x_n-e)\times 
\prod_{i<j} \left( \prod_{e=-t}^{t-1}(x_j-x_i-e) \right)
\]
instead, keeping all leading terms intact. This coefficient is easy to
compute when one applies Lemma \ref{interpol} with 
$C_i\equiv\{0,1,\dots,k-1\}$. Indeed, if ${F}(\bc)\ne 0$ for some 
$\bc\in C_1\times\dots\times C_n$, then $|c_j-c_i|\ge t$ for every pair $i<j$.
Accordingly,
\[
\binom{n}{2}t\le c_1+\dots+c_n\le n(k-1)-\binom{n}{2}t,
\]
thus it must be $c_1+\dots+c_n=\binom{n}{2}t$ and the numbers $c_1,\dots,c_n$,
in some order, must coincide with the numbers $0,t,2t,\dots,(n-1)t$.
Moreover, it must be the natural order, for if $c_i>c_j$ for some $i<j$,
then $c_i-c_j\ge t+1$. Thus the computation of the coefficient reduces
to the evaluation of 
\[
\frac{{F}(c_1,c_2,\dots,c_n)}{\phi_1'(c_1)\phi_2'(c_2)\cdots\phi_m'(c_n)}
\]
at the point $\bc=\left(0,t,2t,\dots,(n-1)t\right)$.
After some cancellations this leads to the value
\[
(-1)^{\binom{n}{2}t}\times \frac{\big( n(k-1)-n(n-1)t\big)!}{(t!)^n}
\times\prod_{i=1}^n\frac{(it)!}{\big(k-1-(i-1)t\big)!}
\]
which is not zero in view of the assumption on the characteristic of the field.
This contradiction completes the proof.
\end{proof}

The tightness of the bound is demonstrated by the choice 
$$
A_i\equiv \{0,1,\dots,k-1\},\  S_{ij}\equiv \{-t+1,-t+2,\dots,t-1\}.
$$

\subsection{Further examples}

The alert reader must have already extracted from the above argument
the following general statement about restricted sumsets, which is
rather folklore, cf. \cite[Theorem 2.1]{ANR}.

\begin{theorem}
Let $d_i,s_{ij}$ denote non-negative integers, and
let $A_1,\dots,A_n$ and $S_{ij}$ ($1\le i<j\le n$) be subsets of a field $\F$
with $|A_i|=d_i+1$, $|S_{ij}|=s_{ij}$. Assume that $N=\sum d_i-\sum s_{ij}\ge 0$.
If the coefficient of the monomial $\prod x_i^{d_i}$ in the polynomial
\[
F_0(\bx)=\left( x_1+\dots+x_n \right)^N \prod_{i<j} (x_j-x_i)^{s_{ij}} \in
\F[x_1,\dots,x_n]
\]
is non-zero, then $\left| {\bigwedge}_{\S}A_i \right|> N$.
\end{theorem}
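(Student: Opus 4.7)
The plan is to follow the standard polynomial-method template: assume the conclusion fails, build a polynomial that vanishes on the entire grid $A_1 \times \dots \times A_n$, and apply Theorem~\ref{L-interpol} to force a certain coefficient of this polynomial to be zero, contradicting the nonvanishing hypothesis on $F_0$.

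Suppose for contradiction that $\left|\bigwedge_\S A_i\right| \le N$, and fix a set $C \subseteq \F$ of cardinality exactly $N$ with $\bigwedge_\S A_i \subseteq C$; if the ground field is too small to accommodate such a $C$, first pass to a sufficiently large algebraic extension, which affects neither the sizes $|A_i|$, $|S_{ij}|$ nor the coefficient of $\prod x_i^{d_i}$ in $F_0$. Define
\[
F(\bx) = \prod_{e \in C}(x_1 + \dots + x_n - e) \cdot \prod_{1 \le i < j \le n}\ \prod_{e \in S_{ij}}(x_j - x_i - e).
\]
Its total degree equals $|C| + \sum_{i<j}|S_{ij}| = N + \sum s_{ij} = \sum d_i$.

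Next I verify that $F$ vanishes on $A_1 \times \dots \times A_n$. Given $(a_1, \dots, a_n)$ in this grid, either $a_j - a_i \in S_{ij}$ for some pair $i < j$, in which case the second factor is zero, or else $a_1 + \dots + a_n \in \bigwedge_\S A_i \subseteq C$, in which case the first factor is zero. Since $|A_i| = d_i + 1$ and $\deg F = \sum d_i$, so that no monomial majorizes $\prod x_i^{d_i}$ in $F$, Theorem~\ref{L-interpol} applied with $C_i = A_i$ forces the coefficient of $\prod x_i^{d_i}$ in $F$ to be zero. But the constants subtracted inside the linear factors of $F$ contribute only to monomials of strictly lower total degree, so this coefficient coincides with the coefficient of $\prod x_i^{d_i}$ in the top-degree homogeneous part of $F$, which is precisely $F_0(\bx)$, contradicting the hypothesis.

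I do not anticipate a genuine obstacle here: unlike the concrete application to Theorem~\ref{hou-sun}, where the hard work is the explicit evaluation of the coefficient, the nonvanishing of the coefficient of $\prod x_i^{d_i}$ in $F_0$ is now taken as an assumption, and everything else reduces to the vanishing/nonvanishing dichotomy provided by Theorem~\ref{L-interpol}. The only minor housekeeping point is the existence of $C$ when $|\F|$ is small, which is handled by the field-extension remark above.
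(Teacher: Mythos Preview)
Your argument is correct and is precisely the one the paper has in mind: the theorem is presented there without a standalone proof, introduced as the general statement ``the alert reader must have already extracted from the above argument,'' i.e.\ from the proof of Theorem~\ref{hou-sun}, and your write-up is exactly that extraction. The field-extension remark to guarantee a set $C$ of size $N$ is a nice bit of housekeeping that the paper leaves implicit.
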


In the proof of Theorem \ref{hou-sun} we applied Lemma \ref{interpol}
in the case $d_i\equiv k-1$, $s_{ij}\equiv 2t$ to obtain the coefficient
$[x_1^{d_1}\dots x_n^{d_n}]F_0$
in a simple product form. Similar arguments work in the following cases.
The first example concerns a related result of Sun and Yeh,
cf. \cite[Theorem 1.1]{SY}, which involves only a minor modification.

\begin{example}
Let $d_i=k-i$, $s_{ij}\equiv 2t-1$. Then $N=n(k-1)-n(n-1)t$ and
\[
[x_1^{d_1}\dots x_n^{d_n}]F_0=
(-1)^{\binom{n}{2}t}\times \frac{N!}{(t!)^nn!}
\times\prod_{i=1}^n\frac{(it)!}{\big(k-1-(i-1)t\big)!}.
\]
\end{example}

\begin{proof}
Apply Lemma \ref{interpol} to the modified polynomial
\[
{F}(\bx)=\prod_{e=\binom{n}{2}t+1}^{n(k-1)-\binom{n}{2}t}(x_1+\dots+x_n-e)\times 
\prod_{i<j} \left( \prod_{e=1-t}^{t-1}(x_j-x_i-e) \right)
\]
with the choice $C_i=\{0,1,\ldots,k-1\}\setminus \{jm\mid 0\le j<i-1\}$.
Once again $F(\bc)\ne 0$ for $\bc\in C_1\times\dots\times C_n$ if and only if
$c_i=(i-1)t$ for every $1\le i\le n$, and the slight changes in the computation
are easy to detect.
\end{proof}

The next example considers the Alon--Nathanson--Ruzsa theorem \cite{ANR}.
Although our approach is not significantly different from the original proof,
we include it for it represents an atypical application of Lemma 
\ref{interpol}, when more than one $\bc\in C_1\times\dots\times C_n$
contributes to a non-zero summand.

\begin{example}
Let the $d_i$ be arbitrary, $s_{ij}\equiv 1$.
Then $N=d_1+\dots+d_n-\binom{n}{2}$ and
\[
[x_1^{d_1}\dots x_n^{d_n}]F_0=\frac{N!}{d_1!\dots d_n!}\prod_{i<j}(d_j-d_i).
\]
\end{example}

\begin{proof}
Replace the polynomial $F_0$ by
\[
{F}(\bx)=\prod_{e=\binom{n}{2}+1}^{d_1+\dots+d_n}(x_1+\dots+x_n-e)\times 
\prod_{i<j} (x_j-x_i) 
\]
and apply Lemma \ref{interpol} with $C_i=\{0,1,\ldots,d_i\}$.
Consider an element $\bc\in C_1\times\dots\times C_n$ whose coordinates
$c_i$ are mutually different.
Then $F(\bc)\ne0$ only if $\{c_1,\dots,c_n\}=\{0,\dots,n-1\}$.
That is, there is a permutation $\pi=\pi_{\bc}\in \Symm(n)$
such that $c_i=\pi_{\bc}(i)-1$. For such a $\bc$,
\[
F(\bc)=(-1)^NN!\times  \sign(\pi_{\bc}){\prod}_{i<j}(j-i),\qquad 
\phi'(c_i)=(-1)^{d_i-c_i}c_i!(d_i-c_i)!.
\]
Since $\binom{d_i}{c_i}=0$ for 
$d_i<c_i$, it is enough to prove that
\[
\sum_{\pi\in \Symm(n)}\sign(\pi)\prod_{i=1}^n\binom{d_i}{\pi(i)-1}=
\prod_{i<j}\frac{d_j-d_i}{j-i}.
\]
To establish this identity, notice that both sides are completely 
antisymmetric polynomials of minimum possible 
degree $n(n-1)/2$ in the variables $d_i$, which attain the same value at 
$(d_1,\dots,d_n)=(0,\dots,n-1)$. 
\end{proof}

\noindent{\em Remark.} A more direct proof goes as follows.
Write $x^{[k]}=x(x-1)\dots(x-k+1)$ and consider the polynomials
\[
F(\bx)=\left( \sum_{i=1}^n x_i-\binom{n}{2}\right)^{[N]}
\prod_{i<j}(x_j-x_i),\quad
F^*(\bx)=\sum_{k_1+\dots+k_n=d_1+\dots+d_n}\frac{N!}{\prod k_i!}
\prod_{i<j}(k_j-k_i)\prod_{i=1}^n x_i^{[k_i]}.
\]
It is enough to prove that $F-F^*$ vanishes on the Cartesian product of the 
sets $C_i=\{0,1,\dots,d_i\}$, for then 
$[x_1^{d_1}\dots x_n^{d_n}](F-F^*)=0$ by Lemma \ref{interpol} and therefore
\[
[x_1^{d_1}\dots x_n^{d_n}]F_0=[x_1^{d_1}\dots x_n^{d_n}]F=
[x_1^{d_1}\dots x_n^{d_n}]F^*=\frac{N!}{d_1!\dots d_n!}\prod_{i<j}(d_j-d_i)
\]
as claimed. For the proof, notice that $\bc\in C_1\times\dots\times C_n$
implies $F(\bc)=F^*(\bc)=0$ unless $c_i=d_i$ for every $i$, in which
case $F(\bc)=F^*(\bc)$ follows from the very choice of the coefficients
in $F^*$. This argument can be extended to show that in fact $F=F^*$.

\medskip
Our final example originates in Xin \cite{Xin}, where it appears in 
the form of the constant term identity
\begin{equation}
\CT\left[ x_1^{-a_1}\dots x_n^{-a_n}(x_1+\dots+x_n)^{a_1+\dots+a_n}
{\prod}_{i\ne j}(1-x_j/x_i)^{a_i}     \right]=\binom{\abs{\ba}}{\ba},
\label{xin}
\end{equation}
see also \cite{GLXZ}. Here the full capacity of 
Theorem \ref{H-interpol} can be exploited with a minimum amount of
computation.

\begin{example}
Let $d_i=na_i$, $s_{ij}=a_i+a_j$. Then $N=a_1+\dots+a_n$ and 
\[
[x_1^{d_1}\dots x_n^{d_n}]F_0= (-1)^{\sum_{i<j}a_i}\binom{\abs{\ba}}{\ba}.
\]
\end{example}

\begin{proof}
For the proof we may assume that $\char(\F)=0$. Choose an arbitrary set
$B=\{b_1,\dots,b_n\}\subset \F$ so that $b_1+\dots+b_n=0$, and consider the
multisets $C_1,\dots,C_n$ with $\supp(C_i)=B$ and multiplicity functions
given by $\omega_i(b_j)=a_i+\chi(j=i)$; then $|C_i|=d_i+1$. We apply
Theorem \ref{H-interpol} to the polynomial
\[
F(\bx)=(-1)^{\sum_{i<j}a_i}F_0(\bx)=
(x_1+\dots+x_n)^{a_1+\dots+a_n} {\prod}_{i\ne j}(x_i-x_j)^{a_i}. 
\]
There is only one non-zero summand in the summation formula
for $[x_1^{d_1}\dots x_n^{d_n}]F$.
Indeed, suppose that
\[
\frac{\partial^{m_1+\dots+m_n}F}{\partial x_1^{m_1}\dots
\partial x_n^{m_n}}(c_1,\ldots,c_n)\ne 0
\]
for some $\bc\in C_1\times\dots\times C_n$ with $0\le m_i<\omega_i(c_i)$.
First we show that the coordinates $c_i$ are mutually different.
Assume that on the contrary, $c_i=c_j$ for some $i\ne j$.
Then $m_i+m_j\le \omega_i(c_i)+\omega_j(c_i)\le a_i+a_j-1$.
This implies that the polynomial $H:=\prod(\partial/\partial x_i)^{m_i}F$ 
is divisible by $x_i-x_j$, a contradiction.

Thus, $\{c_1,\dots,c_n\}=\{b_1,\dots,b_n\}$. Note that $m_i\le a_i$. 
If $\sum m_i<\sum a_i$, then $H$ is divisible by $\sum x_i$, a contradiction.
Accordingly, $m_i=a_i$, $c_i=b_i$ for every $i$. Moreover, all the 
$a_1+\dots+a_n$ partial derivatives must be applied to the term
$(x_1+\dots+x_n)^{a_1+\dots+a_n}$ in $F$. After all, we get
\[
[x_1^{d_1}\dots x_n^{d_n}]F=\prod_{i=1}^n\kappa(C_i,b_i,a_i)
\frac{\partial^{a_1+\dots+a_n}F}{\partial x_1^{a_1}\dots
\partial x_n^{a_n}}(b_1,\ldots,b_n)=\binom{\abs{\ba}}{\ba},
\]
for $\prod_{i\ne j}(b_i-b_j)^{a_i}=\prod_i\prod_{c\in B\setminus\{b_i\}}(b_i-c)^{a_i}$.
\end{proof}

\noindent
{\em Remarks.}
{\em 1.} A connection between restricted sumsets and
Morris's  constant term identity was made recently by Zhou \cite{Zhou2}.
{\em 2.} Let $h_r(\bx)=\sum_{1\le j_1\le \dots\le j_r\le n}x_{j_1}\dots x_{j_r}$
denote the complete symmetric function of degree $r$. Following
Good's method \cite{Good} one gets the following generalization of
(\ref{xin}), also implicit in \cite{Xin}:
\[
\CT\left[ x_1^{-ra_1}\dots x_n^{-ra_n}h_r(x_1,\dots,x_n)^{a_1+\dots+a_n}
{\prod}_{i\ne j}(1-x_j/x_i)^{a_i}     \right]=\binom{\abs{\ba}}{\ba}.
\]
It would be interesting to obtain a proof of this identity based
on the Combinatorial Nullstellensatz.

\bigskip
\section{On a problem of Kadell}
\label{4}

The aforementioned idea of Aomoto
led Kadell \cite{Kadell2} to discover and prove
the following Dyson-type identity.
Fix $m<n$. For $1\le r\le n$ and an $m$-element subset $M$ of 
$\{1,2,\dots,n\}$, consider the Laurent polynomial
\[
\K_{r,M}(\bx;\ba)=
\bigg(1+\sum_{v\not\in M}a_v\bigg)\prod_{s\in M}\left(1-\frac{x_r}{x_s}\right)
\D(\bx;\ba).
\]
Note that $\K_{r,M}(\bx;\ba)=0$ if $r\in M$. Then, according to 
\cite[Theorem 1]{Kadell2},
\begin{equation}
\CT\Bigg[\sum_{r=1}^n\sum_{\abs{M}=m}\K_{r,M}(\bx;\ba)\Bigg]=
n\binom{n-1}{m}(1+\abs{\ba})\binom{\abs{\ba}}{\ba}.
\label{kadellres}
\end{equation}
Kadell suggested that each non-zero function $\K_{r,M}(\bx;\ba)$ must have
the same contribution to the constant term. He formulated an even more
general hypothesis (see \cite[Conjecture 2]{Kadell2}), which was established
recently by Zhou \cite{Zhou} based on the first layer formulas for 
Dyson-coefficients \cite[Theorem 1.7]{LXZ}. Kadell also suggested the
following $q$-analogue of his hypothesis.

\begin{conjecture}[{\cite[Conjecture 3]{Kadell2}}]
Let $M\subset \{1,2,\dots,n\}$ and $\{r_s\mid s\in M\}\cap M=
\emptyset$. Then
\[
\CT\Bigg[\prod_{1\leq i < j \leq n} 
\left(\frac{x_i}{x_j}\right)_{a_i^*}
\left(\frac{qx_j}{x_i}\right)_{a_j^*}\Bigg]=
\frac{1-q^{1+\abs{\ba}}}{1-q^{1+\sum_{v\not\in M}a_v}}\qbin{\abs{\ba}}{\ba},
\]
where, with a slight abuse of notation, 
$a_i^*=a_i^*(j)=a_i+\chi(j\in M,i=r_j)$
and $a_j^*=a_j^*(i)=a_j+\chi(i\in M,j=r_i)$.
\label{kadellconj}
\end{conjecture}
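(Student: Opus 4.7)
The plan is to mirror the one-page derivation of the $q$-Morris identity promised in Section~6, incorporating the $M$-bumps as an adjustment. Using the identities $(x_i/x_j)_k = x_j^{-k}\prod_{\ell=0}^{k-1}(x_j-q^\ell x_i)$ and $(qx_j/x_i)_k = x_i^{-k}\prod_{\ell=0}^{k-1}(x_i-q^{\ell+1}x_j)$, the Laurent polynomial inside the constant term equals $P^*(\bx)/\prod_t x_t^{N_t^*}$ with $N_t^* = \abs{\ba}-a_t+\chi(t\in M)$, where
\[
P^*(\bx) \;=\; P(\bx)\,\prod_{s\in M}\bigl(x_s - q^{d_s} x_{r_s}\bigr),
\]
$P$ is the $q$-Dyson numerator polynomial and $d_s \in \{a_{r_s},\,a_{r_s}+1\}$ according to whether $r_s < s$ or $r_s > s$. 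A degree check gives $\deg P^* = (n-1)\abs{\ba} + |M| = \sum_t N_t^*$, so the desired constant term is precisely the coefficient $[\prod_t x_t^{N_t^*}] \, P^*(\bx)$.

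I would extract this coefficient using Theorem~\ref{H-interpol} with multisets $C_t$ supported on negative powers of $q$. For $t \notin M$, take $C_t = \{q^{-j} : j \in J_t\}$ to be the ordinary set of size $N_t+1 = N_t^*+1$ employed by K\'arolyi and Nagy in their proof of the $q$-Dyson identity (which Section~6 recasts in the language of the present paper). For $t \in M$ the required size is $N_t+2$, which I achieve by promoting a single element of $J_t$ to multiplicity two. The disjointness condition $\{r_s : s \in M\} \cap M = \emptyset$ ensures that the variable $x_t$ appears in exactly one bump factor, namely $L_t = (x_t - q^{d_t} x_{r_t})$, and the doubled point in $C_t$ should be chosen so that $L_t$ vanishes at the $q$-Dyson-compatible node for $x_{r_t}$.

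With this set-up, the $q$-Dyson factors of $P$ rigidify the admissible evaluation patterns $c_t = q^{-A_t}$ essentially as in Section~6, and each bump $L_s$ excises the single boundary value $A_{r_s} - A_s = d_s$. At a multiplicity-two node the Hermite option $m_t = 1$ differentiates through precisely $L_t$, reinstating the otherwise forbidden configuration but with a prefactor coming from $\kappa(C_t, c_t, m_t)$. The non-vanishing summands of the Hermite formula thus decompose into a small family of admissible configurations; every other choice either violates a $q$-Dyson inequality or leaves an undifferentiated bump factor to vanish.

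The final step is to evaluate the Hermite coefficients $\kappa(C_t, c_t, m_t)$ through the explicit formula of Theorem~\ref{H-interpol}, collect the contributions of the undifferentiated bumps at the $q$-Dyson nodes, and sum the resulting $q$-monomials against the $q$-Dyson value $\qbin{\abs{\ba}}{\ba}$. The expected simplification relies on the geometric identity
\[
1 + q^{1+\abs{\ba}-T_M}\,\frac{[T_M]_q}{[1+\abs{\ba}-T_M]_q} \;=\; \frac{1-q^{1+\abs{\ba}}}{1-q^{1+\abs{\ba}-T_M}},\qquad T_M := \sum_{v\in M}a_v,
\]
which recasts the right-hand side of the conjecture in a form compatible with the Hermite output. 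I expect the main obstacle to be precisely this collapse: the conjectured quotient of $q$-integers does not factorise over the individual elements of $M$, so the contributions of the various admissible configurations must be matched and summed with care, tracking both the $q$-powers emerging from the Hermite coefficients and those coming from the undifferentiated bump values.
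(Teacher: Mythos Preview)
The proposal cannot succeed because the statement you are trying to prove is \emph{false}. As the paper notes immediately after stating the conjecture, Zhou showed that it already fails for $n=3$, $|M|=1$. The paper does not prove Conjecture~\ref{kadellconj}; it proves only the special case $M=\{1,\dots,m\}$, $r_s\equiv n$ (Theorem~\ref{main}), and does so with Lemma~\ref{interpol} (ordinary Lagrange interpolation, no multiplicities), not with the Hermite version.

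Your own final paragraph essentially locates the failure. You correctly observe that the conjectured quotient $(1-q^{1+|\ba|})/(1-q^{1+\sum_{v\notin M}a_v})$ does not factor over the individual elements of $M$, and you anticipate that the ``collapse'' of the Hermite contributions will be the main obstacle. It is: in the general case that collapse simply does not occur, because the right-hand side is wrong. The interpolation machinery will produce \emph{some} polynomial in $q$ for the constant term, but for generic $M$ and $r_s$ it will not match the conjectured formula. What makes the special case in Theorem~\ref{main} work is that with $M$ an initial segment and a single common $r_s=n$, the combinatorics of admissible exponent configurations still pins down a unique survivor $\bc$, after which the evaluation is a routine comparison with the $q$-Dyson computation. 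Your Hermite set-up with doubled nodes is designed to accommodate multiple configurations, and the sum over those configurations is precisely what fails to telescope to the conjectured value in general.
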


\noindent
Zhou \cite{Zhou} pointed out that this conjecture already fails 
for $n=3$, $\abs{M}=m=1$, and proved
a meaningful $q$-analogue of \cite[Conjecture 2]{Kadell2}, which is too
technical to be recalled here in detail. Our main result 
in this section is the proof of
the following special case of Conjecture \ref{kadellconj}
corresponding to $M=\{1,\dots,m\}$ and $r_s\equiv n$ that, unexpectedly,
does not seem to be implied by Zhou's result.

\begin{theorem}
Let $m<n$. Then
\[
\CT\Bigg[\prod_{1\leq i < j \leq n} 
\left(\frac{x_i}{x_j}\right)_{a_i}
\left(\frac{qx_j}{x_i}\right)_{a_j^*}\Bigg]=
\frac{1-q^{1+\abs{\ba}}}{1-q^{1+\sum_{v=m+1}^na_v}}\qbin{\abs{\ba}}{\ba},
\]
where $a_n^*=a_n+\chi(i\le m)$ and $a_j^*=a_j$ otherwise.
\label{main}
\end{theorem}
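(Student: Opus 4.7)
The plan is to follow the strategy of K\'arolyi and Nagy \cite{KN} for the classical $q$-Dyson identity, adapted to the asymmetric modification coming from the pairs $(i,n)$ with $i\le m$. For $i<n$, set $d_i = \abs{\ba} - a_i + \chi(i \le m)$, and let $d_n = \abs{\ba} - a_n$; these are the negatives of the minimal exponents of $x_i$ in the Laurent polynomial $L(\bx)$ of the theorem, so $F(\bx) := \prod_i x_i^{d_i} L(\bx)$ is a polynomial. Since each factor of $L$ has total degree zero, $\deg F = \sum_i d_i = (n-1)\abs{\ba} + m$, and no monomial majorizes $\prod_i x_i^{d_i}$ in $F$; hence the constant term of $L$ equals the coefficient of $\prod_i x_i^{d_i}$ in $F$, which we evaluate via Theorem \ref{L-interpol} with the geometric sets $C_i = \{q^0, q^1, \dots, q^{d_i}\}$. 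Writing $c_i = q^{k_i}$ with $0 \le k_i \le d_i$, each factor $1 - q^t x_i/x_j$ of $L$ evaluates to $1 - q^{t + k_i - k_j}$, which vanishes precisely when $k_j - k_i = t$; therefore $F(\bc) \ne 0$ if and only if
\[
k_j - k_i \notin \{-a_j^*, -a_j^*+1, \dots, a_i - 1\} \quad \text{for every } i < j,
\]
where $a_j^* = a_j$ unless $j = n$ and $i \le m$, in which case $a_j^* = a_n + 1$.

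The key combinatorial step is to prove that the only admissible tuple $(k_1, \dots, k_n)$ is the canonical one, $k_i = a_1 + a_2 + \dots + a_{i-1}$. This is done by descending induction on $n$: the constraints for the pairs $(i, n)$, together with the upper bound $k_n \le d_n = \abs{\ba} - a_n$, force $k_n = d_n$, since any smaller value makes the reduced system on $(k_1, \dots, k_{n-1})$ infeasible. The critical point is that the modification is self-compensating: for $i \le m$ the upper bound $d_i$ is increased by $1$, while the forbidden interval simultaneously widens by $1$ on the negative side (because $a_n^* = a_n + 1$), so no new configurations arise. Once $k_n$ is fixed, the remaining system on $(k_1, \dots, k_{n-1})$ has the same shape with $n$ replaced by $n-1$, and induction completes the argument. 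Verifying this uniqueness is the main obstacle of the proof.

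For the final evaluation one compares $F(\bc)/\prod_i \phi_i'(c_i)$ at the canonical $\bc$ with its unmodified ($m=0$) counterpart. The extra factor $\prod_{i \le m}(1 - q^{a_n+1} x_n/x_i)$ in $L$, together with the extra $\prod_{i \le m} x_i$ coming from the shifts $d_i \to d_i + 1$, evaluates to $\prod_{i=1}^m q^{k_i}(1 - q^{\sum_{j \ge i} a_j + 1})$; meanwhile, each extra element $q^{d_i}$ in $C_i$ contributes an additional $(q^{k_i} - q^{d_i}) = q^{k_i}(1 - q^{\sum_{j > i} a_j + 1})$ to $\phi_i'(c_i)$. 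The modification-induced correction factor is therefore
\[
\prod_{i=1}^m \frac{1 - q^{\sum_{j \ge i} a_j + 1}}{1 - q^{\sum_{j > i} a_j + 1}} = \frac{1 - q^{1 + \abs{\ba}}}{1 - q^{1 + \sum_{v=m+1}^n a_v}},
\]
a telescoping product. Multiplying by the classical $q$-Dyson value $\qbin{\abs{\ba}}{\ba}$, which is precisely what the K\'arolyi--Nagy computation yields for the unmodified problem, gives the claimed formula.
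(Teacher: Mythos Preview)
Your overall strategy---reformulate the constant term as the coefficient of $\prod x_i^{d_i}$ in a polynomial of exactly that degree, apply Theorem~\ref{L-interpol} with the geometric sets $C_i=\{1,q,\dots,q^{d_i}\}$, show that a single exponent tuple survives, and then compare the evaluation to the unmodified $q$-Dyson computation---is exactly the paper's approach. Your final telescoping evaluation of the correction factor is correct and coincides with the paper's calculation.

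The gap is the uniqueness step, which you yourself flag as ``the main obstacle'', and here your sketch does not quite go through as written. Two specific issues:

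\medskip
\noindent\emph{(i) The ``same shape'' claim is false.} After fixing $k_n=d_n=|\ba|-a_n$, the constraints from the pairs $(i,n)$ force $k_i\le |\ba|-a_n-a_i$ for every $i<n$ (the alternative $k_i\ge k_n+a_n^*+1$ exceeds $d_i$). The residual system on $(k_1,\dots,k_{n-1})$ is then the \emph{unmodified} $q$-Dyson system on $n-1$ variables with parameters $(a_1,\dots,a_{n-1})$; both the extra $\chi(i\le m)$ in the bound and the modification $a_n^*$ have disappeared. So there is no genuine induction: it is a one-step reduction to the already-known K\'arolyi--Nagy uniqueness. That is fine for the conclusion, but it is not what you wrote.

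\medskip
\noindent\emph{(ii) You have not actually shown that $k_n=d_n$ is forced.} The assertion ``any smaller value makes the reduced system infeasible'' is exactly the crux, and it is not automatic: for $k_n<d_n$ some of the $k_i$ may lie above $k_n$, and the ``self-compensating'' observation (that for $i\le m$ the bound $d_i$ and the forbidden interval each grow by~$1$) only controls the single pair $(i,n)$, not the interactions among the remaining pairs. The paper does \emph{not} argue by induction here. Instead, writing the sorted order as $\alpha_{\pi(1)}<\dots<\alpha_{\pi(n)}$ and summing the minimal consecutive gaps $\alpha_{\pi(i+1)}-\alpha_{\pi(i)}\ge a_{\pi(i)}+\chi(\pi(i)>\pi(i+1))+\chi(\pi(i)=n,\ \pi(i+1)\le m)$, one gets a tight two-sided bound that forces $\pi=\id$ unless $\pi(n)\le m$; and in the surviving case one locates two distinct descents of $\pi$ where the budget allows only one, a contradiction. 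This direct permutation argument is what closes the gap, and your sketch would need to be replaced by it (or by an equivalent count).
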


Specializing at $q=1$ and taking into account the symmetry of the
Dyson product we obtain the following special case of 
\cite[Conjecture 2]{Kadell2}, which already implies (\ref{kadellres}).

\begin{corollary}
Let $m<n$, $M\subset \{1,\dots,n\}$ with $\abs{M}=m$ and 
$r\in\{1,\dots,n\}\setminus M$. Then
\[
\CT\Bigg[\prod_{s\in M}\left(1-\frac{x_r}{x_s}\right)\D(\bx;\ba)\Bigg]=
\frac{1+\abs{\ba}}{1+\sum_{v\not\in M}a_v}\binom{\abs{\ba}}{\ba}.
\]
\end{corollary}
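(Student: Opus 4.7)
The plan is to deduce the corollary directly from Theorem~\ref{main} in two easy moves: first, specialize at $q=1$ to obtain the case $M=\{1,\dots,m\}$ and $r=n$; then invoke the joint permutation symmetry of the Dyson product to upgrade this to arbitrary $M$ and $r$.

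For the specialization, set $q=1$ in the integrand of Theorem~\ref{main}. Since $(x_i/x_j)_{a_i}\big|_{q=1}=(1-x_i/x_j)^{a_i}$ and $(qx_j/x_i)_{a_j^*}\big|_{q=1}=(1-x_j/x_i)^{a_j^*}$, the Laurent polynomial collapses to $\prod_{i<j}(1-x_i/x_j)^{a_i}(1-x_j/x_i)^{a_j^*}$. Recalling that $a_j^*=a_j$ except when $j=n$ and $i\le m$, in which case $a_n^*=a_n+1$, we may split off the extra exponents as a single factor $\prod_{s=1}^{m}(1-x_n/x_s)$ and recognize the remainder as $\D(\bx;\ba)$. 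On the right-hand side, $(1-q^{1+\An})/(1-q^{1+\sum_{v>m}a_v})\to(1+\An)/(1+\sum_{v>m}a_v)$ as $q\to 1$, while $\qbin{\An}{\ba}\to\binom{\An}{\ba}$. This proves the corollary in the special case $M=\{1,\dots,m\}$, $r=n$.

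For the symmetrization, fix any $M$ of size $m$ and any $r\notin M$, and choose a permutation $\pi$ of $\{1,\dots,n\}$ with $\pi(\{1,\dots,m\})=M$ and $\pi(n)=r$. Relabel the indeterminates by $y_i:=x_{\pi(i)}$ and the parameters by $b_i:=a_{\pi(i)}$. By the evident $\Symm_n$-symmetry of $\D$ under simultaneous permutation of variables and parameters, $\D(\boldsymbol{y};\boldsymbol{b})=\D(\bx;\ba)$, and $\prod_{i\le m}(1-y_n/y_i)=\prod_{s\in M}(1-x_r/x_s)$. Since the constant term operator is unaffected by this purely notational change, substituting into the identity already established and using $\abs{\boldsymbol{b}}=\An$, $\binom{\abs{\boldsymbol{b}}}{\boldsymbol{b}}=\binom{\An}{\ba}$, and $\sum_{v>m}b_v=\sum_{v\notin M}a_v$ yields the stated formula in full generality.

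Neither move presents a real obstacle: the $q\to 1$ limit is routine, and the symmetrization rests only on the manifest invariance of $\D(\cdot\,;\cdot)$ under simultaneous relabeling. All of the substantive content has already been absorbed into Theorem~\ref{main}.
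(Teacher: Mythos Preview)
Your proof is correct and follows exactly the route indicated in the paper: specialize Theorem~\ref{main} at $q=1$ to obtain the case $M=\{1,\dots,m\}$, $r=n$, then use the $\Symm_n$-symmetry of the Dyson product under simultaneous permutation of variables and parameters to reach arbitrary $M$ and $r$. The only difference is that you have spelled out the details the paper leaves implicit.
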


As a final remark we mention that the $m=1$ case of this corollary
in conjunction with the Zeilberger--Bressoud theorem
immediately implies Sills' \cite[Theorem 1.1]{Sills}: For
$1\le r\ne s\le n$,
\[
\CT\big[(x_r/x_s)\D(\bx;\ba)\big]=
\frac{-a_s}{1+\abs{\ba}-a_s}\binom{\abs{\ba}}{\ba}.
\]
In general, one may use the inclusion-exclusion principle to obtain a
formula for the constant term of
\[
\left({x_r^m}/{{\prod}_{s\in M}x_s}\right)\D(\bx;\ba),
\]
in agreement with \cite[Theorem 1.7]{LXZ}.

\bigskip
\noindent{\em Proof of Theorem \ref{main}.}
Note that if $a_i=0$ for some $i<n$, then we may omit all factors that include 
the variable $x_i$ without affecting the constant term.
Accordingly, we may assume that each $a_i$, with the possible exception
of $a_n$, is a positive integer. Clearly the constant term equals the
coefficient of the monomial
\[
\prod_{i=1}^n x_i^{\abs{\ba}-a_i+\chi(i\le m)}
\]
in the homogeneous polynomial
\[
F(\bx)=\prod_{1\le i<j\le n}
\Bigg(\prod_{k=0}^{a_i-1}{\big(x_j-x_iq^k\big)}\times
\prod_{k=1}^{a_j^*}{\big(x_i-x_jq^k\big)} \Bigg),
\]
where 
\[
a_j^*=\begin{cases}\displaystyle
a_j+1 & \text{if $j=n$ and $i\le m$,} \\[3mm]
a_j & \text{otherwise.}
\end{cases}
\]
To express this coefficient we apply Lemma \ref{interpol} with
$\mathbb{F}=\mathbb{Q}(q)$. Once again, the aim is to choose 
the sets $C_i$ so that $F({\bc})=0$ for all but
one element ${\bc}\in C_1\times\dots\times C_n$. This can be easily
achieved as follows. Let $C_i=\{q^{\alpha_i}\mid \alpha_i\in B_i\}$, where
\[
B_i=\{0,1,\ldots,\abs{\ba}-a_i+\chi(i\le m)\}.
\]
The sets $C_i$ clearly have the right cardinalities. 
Now assume that $c_i=q^{\alpha_i}\in C_i$ and $F(\bc)\ne 0$.
Then all the $\alpha_i$ are distinct. Moreover, 
\[
\alpha_j\ge \alpha_i+a_i+\chi(j<i)+\chi(i=n, j\le m)
\]
holds for $\alpha_j> \alpha_i$. Next consider the unique permutation 
$\pi\in \Symm_n$ for which 
\[
0\le \alpha_{\pi(1)}< \alpha_{\pi(2)}<\dots< \alpha_{\pi(n)}\le 
\abs{\ba}-a_{\pi(n)}+\chi\big(\pi(n)\le m\big).
\]
We obtain the chain of inequalities
\[
\abs{\ba}-a_{\pi(n)}=
\sum_{i=1}^{n-1}a_{\pi(i)}\le 
\sum_{i=1}^{n-1}\big(\alpha_{\pi(i+1)}-\alpha_{\pi(i)}\big)=
\alpha_{\pi(n)}-\alpha_{\pi(1)}\le \abs{\ba}-a_{\pi(n)}+1.
\]
Notice that the first inequality is strict if $\pi$ is not the identity
permutation, while the second inequality is strict if $\pi(n)>m$.
Suppose that $\pi(n)\ne n$. Since $\pi\ne\id$, it must be $\pi(n)\le m$.
Consider the index $i$ with $\pi(i)=n$. Then 
$\alpha_{\pi(i+1)}-\alpha_{\pi(i)}= a_{\pi(i)}+1$,
which implies $\pi(i+1)>m$. Therefore there must be an index $i+1\le j<n$
such that $\pi(j)>m$ and $\pi(j+1)\le m$. For such a $j$ we have
$\alpha_{\pi(j+1)}-\alpha_{\pi(j)}\ge  a_{\pi(j)}+1$, resulting in
\[
\abs{\ba}-a_{\pi(n)}=
\sum_{i=1}^{n-1}a_{\pi(i)}\le 
\sum_{i=1}^{n-1}\big(\alpha_{\pi(i+1)}-\alpha_{\pi(i)}\big)-2
\le \abs{\ba}-a_{\pi(n)}-1, 
\]
a contradiction.
Thus we can conclude that $\pi(n)=n$, implying $\pi= \id$ and 
$\alpha_i=a_1+\dots +a_{i-1}$ for every $i$. 

It only remains to substitute these values into
\[
\frac{F(c_1,c_2,\dots,c_n)}{\phi_1'(c_1)\phi_2'(c_2)\cdots\phi_n'(c_n)},
\]
which is quite a routine calculation. Therefore we only recall that
substituting the same values in the same formula working with
\[
F(\bx)=\prod_{1\le i<j\le n}
\Bigg(\prod_{k=0}^{a_i-1}{\big(x_j-x_iq^k\big)}\times
\prod_{k=1}^{a_j}{\big(x_i-x_jq^k\big)} \Bigg)
\]
and $B_i=\{0,1,\ldots,\abs{\ba}-a_i\}$ yields the $q$-Dyson
constant term $\CT[\D_q(\bx;\ba)]$, see \cite{KN}. 
The changes are easily detected, and noting
$\alpha_i+a_i=\alpha_{i+1}$, $\alpha_n+a_n=\abs{\ba}$ we find
that the constant term in question is indeed
\[
\prod_{i=1}^m\frac{q^{\alpha_i}-q^{\alpha_n+a_n+1}}{q^{\alpha_i}-q^{\abs{a}-a_i+1}} 
\qbin{\An}{\ba}=\frac{1-q^{1+\abs{\ba}-\alpha_1}}{1-q^{1+\abs{\ba}-\alpha_{m+1}}}
\qbin{\An}{\ba},
\]
as claimed.
\qed

\bigskip
\section{A new proof of the Selberg integral}
\label{5}

Due to its equivalence to the Selberg integral, it will be enough to 
establish Morris's constant term identity (\ref{morris}). Making the
Laurent polynomial homogeneous by the introduction of a new variable does not
affect the constant term. Thus, we are to determine the constant term of the
Laurent polynomial
\[
\M(x_0,\bx;a,b,k):= \prod_{j=1}^n \left(1-\frac{x_j}{x_0}\right)^a
\left(1-\frac{x_0}{x_j}\right)^b
\prod_{1\leq i \neq j \leq n}\left(1-\frac{x_i}{x_j}\right)^k,
\] 
which is the same as the coefficient of 
$x_0^{na}\prod_{i=1}^nx_i^{(n-1)k+b}$
in the homogeneous polynomial
\[
\prod_{j=1}^n(x_0-x_j)^a(x_j-x_0)^b\times \prod_{1\leq i \neq j \leq n}(x_j-x_i)^k.
\]
As in Section 3, we modify this polynomial without affecting this leading
coefficient and consider
\begin{equation}
F(x_0,\bx)=\prod_{j=1}^n\prod_{e=-a}^{b-1}(x_j-x_0-e)\times
\prod_{1\le i<j\le n}\prod_{e=-k}^{k-1}(x_j-x_i-e).
\label{m-poly}
\end{equation}
To apply Lemma \ref{H-interpol} efficiently, we choose sets
$C_i=\{0,1,\dots,(n-1)k+b\}$ for $1\le i\le n$ and multiset
\[
C_0=\{0\}\cup \bigcup_{\ell=0}^{n-1}\{k\ell+1,k\ell+2,\dots,k\ell+a\}.
\]
They have the right cardinality, the latter one being an ordinary set 
if $k\ge a$. Consider $c_i\in {C_i}$ and $m_i<\omega_i(c_i)$.
Note that $m_i=0$ for $1\le i\le n$. We proceed to prove that 
\[
\frac{\partial^{m_0+\dots+m_n}F}{\partial x_0^{m_0}\dots
\partial x_n^{m_n}}(c_0,\ldots,c_n)= 
\frac{\partial^{m_0}F}{\partial x_0^{m_0}}(c_0,\ldots,c_n)=0
\]
for all but one such selection.

\begin{lemma}
If $c_0\ne 0$, then
\begin{equation}
\frac{\partial^{m_0}F}{\partial x_0^{m_0}}(c_0,\ldots,c_n)=0.
\label{nulla}
\end{equation}
\end{lemma}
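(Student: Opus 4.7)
The plan is to factor $F = G \cdot H$, where
\[
G(x_0, \bx) := \prod_{j=1}^n \prod_{e=-a}^{b-1}(x_j - x_0 - e)
\]
collects the factors involving $x_0$ and $H(\bx)$ denotes the remaining product in (\ref{m-poly}), which is independent of $x_0$. Then $\partial^{m_0} F / \partial x_0^{m_0} = (\partial^{m_0} G / \partial x_0^{m_0}) \cdot H$, so if $H(c_1, \ldots, c_n) = 0$ the statement holds trivially, and it suffices to treat the case $H(c_1, \ldots, c_n) \ne 0$. In that situation I would aim to show that $c_0$ is a root of the univariate polynomial $x_0 \mapsto G(x_0, c_1, \ldots, c_n)$ of multiplicity strictly greater than $m_0$.

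The non-vanishing of $H$ forces $c_j - c_i \notin \{-k, \ldots, k-1\}$ for all $1 \le i < j \le n$, so the sorted values $c_{(1)} < c_{(2)} < \cdots < c_{(n)}$ have consecutive gaps at least $k$. Combined with $0 \le c_{(1)}$ and $c_{(n)} \le (n-1)k + b$, which are inherited from $c_i \in C_i$, the shifts $e_i := c_{(i)} - (i-1)k$ then satisfy $0 \le e_1 \le e_2 \le \cdots \le e_n \le b$.

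The key combinatorial step is a multiplicity count. A factor $(c_j - x_0 - e)$ of $G(x_0, c_1, \ldots, c_n)$ vanishes at $x_0 = c_0$ precisely when $c_j - c_0 = e$ for some $e \in \{-a, \ldots, b-1\}$, so the multiplicity of $c_0$ as a root equals $\#\{j : c_j \in [c_0 - a, c_0 + b - 1]\}$. Unpacking the definition of $C_0$ gives $\omega_0(c_0) = \#\{\ell \in \{0, \ldots, n-1\} : c_0 - a \le k\ell \le c_0 - 1\}$. The bridge between the two counts is the injection $\ell \mapsto \ell + 1$: for each such $\ell$,
\[
c_{(\ell+1)} = k\ell + e_{\ell+1} \in [k\ell,\ k\ell + b] \subseteq [c_0 - a,\ c_0 + b - 1].
\]
Hence the multiplicity is at least $\omega_0(c_0) > m_0$, and (\ref{nulla}) follows.

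The main potential hurdle is isolating the clean structural description $c_{(i)} = (i-1)k + e_i$ with monotone $e_i \in [0,b]$; after that, the injection matching $\omega_0(c_0)$ to the root multiplicity is essentially dictated by the definition of $C_0$.
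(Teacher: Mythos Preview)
Your proof is correct. Both your argument and the paper's rest on the same underlying fact---that if no two of the $c_j$ are within $k$ of each other, then at least $\omega_0(c_0)$ of them fall in $[c_0-a,\,c_0+b-1]$---but the organization is genuinely different. The paper proceeds by case analysis: it picks a block $S_u,\dots,S_{u+m_0}$ of intervals all containing $c_0$, and then argues by pigeonhole that either more than $m_0$ of the $c_j$ lie in the window $[uk,\,(u+m_0)k+b]$ (forcing enough $x_0$-factors to vanish), or one of the two complementary intervals $[0,uk-1]$, $[(u+m_0)k+b+1,(n-1)k+b]$ is overcrowded (forcing an $H$-factor to vanish). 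You instead dispose of the $H(\bc)=0$ case at the outset, extract the global structure $c_{(i)}=(i-1)k+e_i$ with $0\le e_1\le\cdots\le e_n\le b$, and then read off the multiplicity bound via the explicit injection $\ell\mapsto c_{(\ell+1)}$. Your route avoids the pigeonhole case split and makes the matching between $\omega_0(c_0)$ and the root multiplicity transparent; the paper's argument is more local, never needing to determine the full arrangement of $(c_1,\dots,c_n)$.
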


\begin{proof}
Write $S_\ell=\{k\ell+1,k\ell+2,\dots,k\ell+a\}$. Since $m_0<\omega_0(c_0)$, 
there is an index $0\le u\le n-\omega_0(c_0)$ 
such that $c_0\in S_u\cap S_{u+1}\cap \dots \cap S_{u+m_0}$. That is,
\[
(u+m_0)k+1\le c_0\le uk+a.
\]
Accordingly, if $c_j$ lies in the interval $[uk, (u+m_0)k+b]$
for some $1\le i\le n$, then
$c_0-a\le c_j\le c_0+b-1$ and there is a term of the form $x_j-x_0-e$ in $F$
which attains 0 when evaluated at the point $(c_0,\bc)$.
It follows, that (\ref{nulla}) holds if more than $m_0$ of such $c_i$ 
lie in the interval $[uk, (u+m_0)k+b]$.

Otherwise either at least $u+1$ of $c_1,\dots,c_n$ lie in the interval
$[0,ku-1]$, or at least $n-m_0-u$ of them lie in the interval
$[(u+m_0)k+b+1,(n-1)k+b]$. In either case there is a pair of indices 
$1\le i<j\le n$ such that $|c_j-c_i|<k$, meaning that there is a term of the 
form $x_j-x_i-e$ in $F$ which attains 0 when evaluated at the point 
$(c_0,\bc)$, and once again we arrive at (\ref{nulla}).
\end{proof}

Thus we only have to consider the case when $c_0=0$; then $\omega_0(c)=1$ and
$m_0=0$. If
\[
\frac{\partial^{m_0}F}{\partial x_0^{m_0}}(c_0,\ldots,c_n)=F(c_0,\bc)\ne 0,
\]
then $c_1,\dots,c_n\in [b,(n-1)k+b]$ and $|c_j-c_i|\ge k$ for each pair
$1\le i<j\le n$. Therefore the numbers $c_1,\dots,c_n$,
in some order, must coincide with the numbers $b,k+b,\dots,(n-1)k+b$.
Moreover it must be the natural order, for if $c_i>c_j$ for some $i<j$,
then $c_i-c_j\ge k+1$. It only remains to evaluate
\[
\prod_{i=0}^n \kappa(C_i,c_i,0) F(c_0,\bc)
\]
at the point $(c_0,\bc)=(0,b,k+b,\dots,(n-1)k+b)$.
Since $\omega_i(c_i)=0$ for each $i$, we simply have 
\[
\kappa(C_i,c_i,0)=
\frac{1}{\prod_{c \in {C_i} \setminus\{c_i\}}(c_i-c)^{\omega_i(c)}}
\]
and one easily recovers (\ref{morris}).
\medskip

\noindent
{\em Remark.}
For the sake of simplicity,
we tacitly assumed that the parameters $a,b,k$ are positive integers. 
It is not difficult to modify the above proof to suit the remaining
cases and we leave it to the reader. Alternatively, one can easily
reduce the $k=0$ case to the Chu-Vandermonde identity, whereas the
$\min\{a,b\}=0$ case is just the equal parameter case of Dyson's
identity.

\bigskip
\section{Interlude}
\label{6}

Replace the polynomial in (\ref{m-poly}) by
\[
\prod_{j=1}^n\prod_{e=1}^{a}(x_0-q^ex_j)\prod_{e=0}^{b-1}(x_j-q^ex_0)\times
\prod_{1\le i<j\le n}\prod_{e=0}^{k-1}(x_j-q^ex_i)\prod_{e=1}^{k}(x_i-q^ex_j).
\]
Also replace the multisets $C_i$ by multisets which consist of powers of $q$
whose exponents belong to $C_i$, and with the same multiplicities. Repeating
the proof given in the previous section almost verbatim
one obtains without any difficulty the following
version of the $q$-Morris constant term identity:
\begin{equation*}
\CT\Bigg[ \prod_{j=1}^n(qx_j)_a(1/x_j)_b \D_q(\bx;k) \Bigg]=
\prod_{j=0}^{n-1}\frac{(q)_{a+b+kj}(q)_{kj+k}}{(q)_{a+kj}(q)_{b+kj}(q)_{k}}.
\label{q-morris}
\end{equation*}
Although the identity conjectured in Morris's thesis \cite{Morris} reads
slightly differently as
\begin{equation}
\CT\Bigg[ \prod_{j=1}^n(x_j)_a(q/x_j)_b \D_q(\bx;k) \Bigg]=
\prod_{j=0}^{n-1}\frac{(q)_{a+b+kj}(q)_{kj+k}}{(q)_{a+kj}(q)_{b+kj}(q)_{k}},
\label{q-morris'}
\end{equation}
the two are easily seen to be equivalent, for each monomial of degree zero
has the same coefficient in the Laurent polynomials
$\prod_{j=1}^n(qx_j)_a(1/x_j)_b$ and $\prod_{j=1}^n(x_j)_a(q/x_j)_b$.
Morris's conjecture was established independently in \cite{Habsieger} and
\cite{Kadell1.5} via the proof of a $q$-Selberg integral proposed by Askey
\cite{Askey}, followed by a  more elementary proof in \cite{Zeilberger}.
\medskip

The above argument relates to the one given in the previous section 
in a similar way as
the derivation of the $q$-analogue of Dyson's conjecture
in \cite{KN} relates to the original version of Karasev and Petrov's proof
\cite{KP} for the Dyson product.
One may say that applications of Lemma \ref{interpol} (or its 
generalization Theorem \ref{H-interpol}) allows one to prove an appropriate 
$q$-analogue practically along the same lines as the original identity,
even without the need to modify the corresponding polynomial.
This works also the
other way around: the way (\ref{q-morris'}) is formulated gives a hint
of an alternative proof of (\ref{morris}) which involves a slightly
different modification along with a slightly different choice of the 
multisets $C_i$. Our preference was given to the modification, which 
allowed a more simple choice for the $C_i$ as well as to keep the
natural order of the variables $x_0,x_1,\dots,x_n$ for the $q$-analogue
in the following sense. 

All the constant term identities and their $q$-analogues studied in this
paper can be formulated in the following context. Let $\bB=(\beta_{ij})$ 
denote an $(n+1)\times(n+1)$ matrix with rows and columns numbered from
0 to $n$, corresponding to the natural order of the variables. It is assumed
that the entries are non-negative integers and
all the diagonal entries are zero. Associated to such a matrix is
the Laurent polynomial
\[
\L(x_0,\bx;\bB)=\prod_{0\leq i \neq j \leq n}\left(1-\frac{x_i}{x_j}\right)^{\beta_{ij}}
\]
and its $q$-analogue
\[
\L_q(x_0,\bx;\bB)=\prod_{0\leq i < j \leq n} 
\left(\frac{x_i}{x_j}\right)_{\beta_{ij}}\left(\frac{qx_j}{x_i}\right)_{\beta_{ji}}.
\]
Thus, one can write 
$\D(\bx;\ba)=\L(x_0,\bx;\bB_\D)$ and $\M(x_0,\bx;a,b,k)=\L(x_0,\bx;\bB_\M)$
with the matrices
\[
\bB_\D=
\left(
\begin{array}{c | ccccc}
0 & 0& 0 &0 & \ldots & 0\\ \hline
0 & 0 & a_1 &  a_1 & \ldots & a_1 \\
0 & a_2 & 0 & a_2  & \ldots & a_2 \\
0 & a_3 & a_3 & 0  & \ldots & a_3 \\
\vdots  & \vdots  & \vdots  & \vdots  & \ddots  & \vdots  \\
0 & a_n & a_n & a_n & \ldots & 0 \\
\end{array}
\right)
\qquad \textrm{and} \qquad
\bB_\M=
\left(
\begin{array}{c | ccccc}
0 & b& b &b & \ldots & b\\ \hline
a & 0 & k &  k & \ldots & k \\
a & k & 0 & k  & \ldots & k \\
a & k & k & 0  & \ldots & k \\
\vdots  & \vdots  & \vdots  & \vdots  & \ddots  & \vdots  \\
a & k & k & k & \ldots & 0 \\
\end{array}
\right)
\]
corresponding to the Dyson resp. Morris constant term identities, 
whereas $\D_q(\bx;\ba)=\L_q(x_0,\bx;\bB_\D)$. Note that simultaneous permutation
of the rows and columns of $\bB$ according to the same element of 
$\Symm_{n+1}$ has no effect on $\CT[\L(x_0,\bx;\bB)]$. Generally it is not the
case for $\CT[\L_q(x_0,\bx;\bB)]$, but as we explained in relation to
the $q$-Morris identity, one may always apply the cyclic permutation
\[
n\to n-1\to \dots\to 1\to 0\to n
\]
or any of its powers 
without affecting the constant term.

Theorem \ref{main} concerns $\CT[\L_q(x_0,\bx;\bB_\K)]$ for the matrix 
\[
\bB_\K=
\left(
\begin{array}{c|cccc|cccc}
0 & 0 & 0 & \ldots & 0 & 0 & \ldots & 0 & 0 \\ \hline
0 & 0 & a_1 & \ldots & a_1 & a_1 & \ldots & a_1 & a_1 \\
0 & a_2 & 0 & \ldots & a_2 & a_2 & \ldots & a_2 & a_2 \\
\vdots & \vdots & \vdots  & \ddots  & \vdots  
& \vdots & \ddots  & \vdots  & \vdots\\
0 & a_m & a_m & \ldots & 0 & a_m & \ldots & a_m & a_m \\ \hline
0 & a_{m+1} & a_{m+1} & \ldots & a_{m+1} & 0 & \ldots & a_{m+1} & a_{m+1} \\
\vdots & \vdots & \vdots  & \ddots  & \vdots  
& \vdots & \ddots  & \vdots  & \vdots\\
0 & a_{n-1} & a_{n-1} & \ldots & a_{n-1} & a_{n-1} & \ldots & 0 & a_{n-1} \\
0 & a_n+1 & a_n+1 & \ldots & a_n+1 & a_n & \ldots & a_n & 0 \\
\end{array}
\right).
\]
Applying the above mentioned
cyclic permutations to $\bB_\K$, after rearranging indices  
we obtain the following more general form of Theorem \ref{main}.

\begin{theorem}
Fix an arbitrary integer $r\in \{1,2,\dots,n\}$. Then 
Conjecture \ref{kadellconj}
is valid with the choice of $M=\{r+1,\dots,r+m\}$ and $r_s\equiv r$, where
indices are understood modulo $n$.
\end{theorem}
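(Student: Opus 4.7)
The plan is to deduce the result directly from Theorem \ref{main} via the cyclic permutation $\sigma : i \mapsto i - 1 \pmod{n+1}$ that, as noted in the preceding discussion, leaves $\CT[\L_q(x_0,\bx;\bB)]$ invariant. In matrix language, Theorem \ref{main} is precisely the statement that $\CT[\L_q(x_0,\bx;\bB_\K)]$ equals the product on the right-hand side of Conjecture \ref{kadellconj}, corresponding to the case $r = n$, $M = \{1,\ldots,m\}$, $r_s \equiv n$.

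First, I would encode the general Kadell configuration with $M = \{r+1,\ldots,r+m\}$ (indices mod $n$) and $r_s \equiv r$ by an $(n+1)\times(n+1)$ matrix $\bB^{(r)}$: its $0$-th row and column vanish, and the remaining $n \times n$ submatrix is the Dyson matrix modified by adding $1$ to each entry of row $r$ in columns $r+1,\ldots,r+m$ (mod $n$). By construction $\bB^{(n)} = \bB_\K$.

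Next, I would apply $\sigma^{n+1-r}$ to $\bB_\K$. Since the Laurent polynomial $\L_q$ does not depend on the variable attached to the all-zero row/column, I may harmlessly re-index the non-dummy variables in their natural order so as to return the dummy to position $0$. A direct inspection of which entry ends up where shows that this re-indexed matrix coincides with $\bB^{(r)}$ up to a cyclic relabeling $a_i \mapsto a_{\pi(i)}$ of the Dyson parameters, where $\pi$ is a cyclic permutation of $\{1,\ldots,n\}$ determined by $r$.

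Finally, I would verify that the right-hand side of Conjecture \ref{kadellconj} is invariant under this parameter relabeling: $\qbin{\abs{\ba}}{\ba}$ is symmetric in $a_1, \ldots, a_n$, and $\sum_{v \notin M} a_v$ for the new configuration corresponds exactly to $\sum_{v=m+1}^n a_v$ appearing in Theorem \ref{main}. Since the $a_i$ are arbitrary parameters, the identity transfers verbatim to every $r \in \{1,\ldots,n\}$. The only real work left is careful bookkeeping of indices modulo $n+1$ (for $\sigma$) and modulo $n$ (for the Kadell labelling); beyond this, there is no genuine obstacle.
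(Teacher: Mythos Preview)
Your proposal is correct and follows exactly the approach the paper itself takes: the paper's entire argument is the single sentence preceding the theorem, ``Applying the above mentioned cyclic permutations to $\bB_\K$, after rearranging indices we obtain the following more general form of Theorem~\ref{main},'' and your write-up simply unpacks this---the cyclic shift on $\{0,1,\dots,n\}$, the harmless relocation of the all-zero (dummy) row/column back to position~$0$ by an order-preserving relabeling, and the resulting cyclic relabeling of the Dyson parameters $a_i$. The only care needed is the index bookkeeping you already flag; there is no further obstacle.
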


Aomoto's identity (\ref{aomoto}) and Forrester's conjecture are related
to the matrices 
\[
\bB_\A=
\left(
\begin{array}{c | ccc|ccc}
0 & b & \ldots & b & b & \ldots & b \\ \hline
a & 0 & \ldots & k & k & \ldots & k \\
\vdots & \vdots & \ddots & \vdots  & \vdots & \ddots & \vdots \\
a & k & \ldots & 0 & k & \ldots & k \\ \hline
a+1 & k & \ldots & k & 0 & \ldots & k \\
\vdots & \vdots & \ddots & \vdots  & \vdots & \ddots & \vdots \\
a+1 & k & \ldots & k & k & \ldots & 0 \\
\end{array}
\right)
\qquad \textrm{and} \qquad
\bB_\Fo=
\left(
\begin{array}{c | ccc|ccc}
0 & b & \ldots & b & b & \ldots & b \\ \hline
a & 0 & \ldots & k & k & \ldots & k \\
\vdots & \vdots & \ddots & \vdots  & \vdots & \ddots & \vdots \\
a & k & \ldots & 0 & k & \ldots & k \\ \hline
a & k & \ldots & k & 0 & \ldots & k+1 \\
\vdots & \vdots & \ddots & \vdots  & \vdots & \ddots & \vdots \\
a & k & \ldots & k & k+1 & \ldots & 0 \\
\end{array}
\right),
\]
where the last $m$ resp. $n-n_0$ rows/columns are separated.
In the first case we rearranged the matrix so that a $q$-analogue can
be formulated within our framework. Our main result concerns the
overlay of these matrices when $m\ge n-n_0$, that is, the matrix
\[
\bB_{\A\Fo}=
\left(
\begin{array}{c | ccc|ccc|ccc}
0 & b & \ldots & b & b & \ldots & b & b & \ldots & b \\ \hline
a & 0 & \ldots & k & k & \ldots & k & k & \ldots & k \\
\vdots & \vdots & \ddots & \vdots  & \vdots & \ddots & \vdots 
& \vdots & \ddots & \vdots \\
a & k & \ldots & 0 & k & \ldots & k & k & \ldots & k \\ \hline
a+1 & k & \ldots & k & 0 & \ldots & k & k & \ldots & k \\
\vdots & \vdots & \ddots & \vdots  & \vdots & \ddots & \vdots 
& \vdots & \ddots & \vdots \\
a+1 & k & \ldots & k & k & \ldots & 0 & k & \ldots & k \\ \hline
a+1 & k & \ldots & k & k & \ldots & k & 0 & \ldots & k+1 \\
\vdots & \vdots & \ddots & \vdots  & \vdots & \ddots & \vdots & 
\vdots & \ddots & \vdots \\
a+1 & k & \ldots & k & k & \ldots & k & k+1 & \ldots & 0 \\
\end{array}
\right)\quad
\begin{array}{c}
0\\ \hline
1\\
\vdots\\
n-m\\ \hline
n-m+1\\
\vdots\\
n_0\\ \hline
n_0+1\\
\vdots\\
n\\ \end{array}\ .
\]

\begin{theorem}
Let $n$ be a positive integer. For arbitrary nonnegative integers
$a,b,k$ and $m,n_0\le n\le m+n_0$,
\[
\CT[\L_q(x_0,\bx;\bB_{\A\Fo})]= \prod_{j=0}^{n-1}
\frac{(q)_{a+b+kj+\chi(j>n_0)(j-n_0)+\chi(j\ge n-m)}(q)_{kj+\chi(j>n_0)(j-n_0)+k}}
{(q)_{a+kj+\chi(j>n_0)(j-n_0)+\chi(j\ge n-m)}(q)_{b+kj+\chi(j>n_0)(j-n_0)}(q)_k}\times
\prod_{j=1}^{n-n_0} \frac{1-q^{(k+1)j}}{1-q^{k+1}}.
\]
\label{a-f}
\end{theorem}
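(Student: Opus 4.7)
The strategy mirrors the proofs in Section~5 and Section~6: homogenize the Laurent polynomial to a polynomial $F(x_0,\bx)$, and apply Theorem~\ref{H-interpol} with multisets of $q$-powers adapted to the entries of $\bB_{\A\Fo}$.

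First I would form
\[
F(x_0,\bx)=\prod_{0\le i<j\le n}\Bigl(\prod_{e=0}^{\beta_{ij}-1}(x_j-q^ex_i)\prod_{e=1}^{\beta_{ji}}(x_i-q^ex_j)\Bigr),
\]
with $\beta_{ij}$ the entries of $\bB_{\A\Fo}$, so that $\CT[\L_q(x_0,\bx;\bB_{\A\Fo})]$ equals the coefficient of $\prod_{j=0}^n x_j^{E_j}$ in $F$ with $E_j=\sum_{i\neq j}\beta_{ij}$. Setting $\alpha_j:=(j-1)k+b+\chi(j>n_0)(j-n_0-1)$ for the ``natural'' positions, I would apply Theorem~\ref{H-interpol} over $\Q(q)$ with $C_i=\{q^\alpha:\alpha\in B_i\}$, taking $B_i=\{0,1,\dots,E_i\}$ as an ordinary set for $1\le i\le n$ and
\[
B_0=\{0\}\cup\bigcup_{j=1}^n\bigl\{\alpha_j+1,\dots,\alpha_j+a+\chi(j>n-m)\bigr\}
\]
as a multiset of size $na+m+1$.

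The first task is the analog of the vanishing lemma from Section~5: for every $c_0=q^{\alpha_0}\in C_0$ with $\alpha_0\neq 0$ and every $\bc\in C_1\times\dots\times C_n$, the partial derivative $\partial^{m_0}F/\partial x_0^{m_0}(c_0,\bc)$ vanishes whenever $m_0<\omega_0(\alpha_0)$. I would follow the clustering dichotomy of Section~5: either at least $m_0+1$ of the $c_j$'s fall sufficiently close to $c_0$ in $q$-exponent to supply that many vanishing factors of the form $(x_0-q^ex_j)$ or $(x_j-q^ex_0)$, or pigeonhole forces some pair of exponents $\alpha_i,\alpha_j$ (with $i<j$) to differ by less than $k$, or by less than $k+1$ if both indices lie in the Forrester block, producing a vanishing pair factor $(x_j-q^ex_i)$. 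The Aomoto shift (an extra $+1$ on column $0$ in the bottom $m$ rows) and the Forrester shift (an extra $+1$ throughout the bottom-right $(n-n_0)\times(n-n_0)$ block) affect only the widths and cluster sizes, not the shape of this argument.

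Having reduced to $c_0=1$, the genuine new difficulty — and the main obstacle — is that the Forrester block creates real multiplicity among the surviving configurations. The admissible tuples $\bc=(q^{\alpha_1},\dots,q^{\alpha_n})$ are those with $\alpha_i\ge b$, $\alpha_i\in B_i$, and, for every $i<j$, $\alpha_j-\alpha_i\ge k+\chi(i>n_0)$ or $\alpha_i-\alpha_j\ge k+1+\chi(i>n_0)$ according as $\alpha_j>\alpha_i$ or not. In the pure Morris/Aomoto setting these constraints would force the identity order and yield a unique assignment, but the extra slack $n-n_0-1$ available to the Forrester block allows many sort-order permutations $\sigma\in\Symm_n$ to saturate the range $[b,\,b+(n-1)k+(n-n_0-1)]$ tightly. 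The final step is to sum $\prod_{i=0}^n\kappa(C_i,c_i,0)\cdot F(1,\bc)$ over the admissible configurations and show that the total factors as the main product $\prod_{j=0}^{n-1}(\cdots)$ of the theorem, multiplied by the $q$-factorial $[n-n_0]_{q^{k+1}}!=\prod_{j=1}^{n-n_0}(1-q^{(k+1)j})/(1-q^{k+1})$. I expect this collapse to arise from a Mahonian-style identity on the admissible sort-permutations — each $\sigma$ contributing a $q^{k+1}$-weighted inversion statistic carried jointly by the $\kappa$-factors and $F(1,\bc)$ — analogous to, but materially more intricate than, the concluding computation in the proof of Theorem~\ref{main}. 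Identifying the precise bijection between admissible $\sigma$'s and shuffles of the Forrester block, and verifying that the resulting $q$-statistic matches $q^{(k+1)\cdot\text{inv}}$, is the principal technical content of the proof.
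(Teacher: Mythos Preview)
Your overall strategy matches the paper's, but there is a genuine gap at the heart of the argument: the choice $B_i=\{0,1,\dots,E_i\}$ for $1\le i\le n$ is not the one that makes the proof go through, and you have correctly diagnosed the symptom without finding the cure.

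In the paper the sets $A_j$ for $1\le j\le n$ are \emph{not} full intervals. They are taken as
\[
A_j=\{0\}\cup\bigcup_{t=0}^{n-1}\bigl[\Delta_t-\gamma_{\min\{t,j\}}+1,\ \Delta_t\bigr],
\]
with $\gamma_0=b$, $\gamma_1=\dots=\gamma_{n_0}=k$, $\gamma_{n_0+1}=\dots=\gamma_{n-1}=k+1$ and $\Delta_t=\sum_{i\le t}\gamma_i$. The point is that the width of the $t$-th block in $A_j$ depends on $j$ through $\gamma_{\min\{t,j\}}$. This dependence is exactly what forces uniqueness: one first shows (their Lemma~7.3) that at most one $\alpha_i$ can lie in each interval $I_t=[\Delta_{t-1}+1,\Delta_t]$, and the proof of that lemma uses that if $i\le n_0<t$ then the left endpoint $\Delta_t-k$ of $I_t$ is \emph{excluded} from $A_i$. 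With this in hand, a short inequality chain (their Lemma~7.2) pins down $\pi=\id$ and $\alpha_i=\Delta_{i-1}$, so there is a single surviving term and the final product drops out of one substitution, with no summation whatsoever.

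With your full-interval choice that exclusion is lost, the $I_t$-lemma fails, and you are left (as you anticipate) with a genuine sum over admissible permutations. You then defer the evaluation of that sum to an unspecified ``Mahonian-style identity'' and call it ``the principal technical content of the proof''; but nothing in your outline indicates why such an identity should hold, and you do not supply one. (A small side issue: your stated nonvanishing condition has $\chi(i>n_0)$ where it should be $\chi(i>n_0\text{ and }j>n_0)$, since $\beta_{ij}=k+1$ only when both indices lie in the Forrester block.) The paper's insight is precisely to sidestep this sum by the more refined choice of $A_j$; that idea is what is missing from your proposal.
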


\noindent
When $m=0$, this proves Baker and Forrester's \cite[Conjecture 2.1]{BF},
and further specializing at $q=1$, Forrester's original conjecture as well.
The $n_0=n$ case gives the following $q$-analogue of Aomoto's identity.

\begin{corollary}
Let $n$ be a positive integer. For arbitrary nonnegative integers
$a,b,k$ and $m\le n$,
\[
\CT[\L_q(x_0,\bx;\bB_\A)]= \prod_{j=0}^{n-1}
\frac{(q)_{a+b+kj+\chi(j\ge n-m)}(q)_{kj+k}}{(q)_{a+kj+\chi(j\ge n-m)}(q)_{b+kj}(q)_k}.
\]
\end{corollary}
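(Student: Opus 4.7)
The plan is to derive the corollary as the $n_0 = n$ specialization of Theorem \ref{a-f}, which appears immediately above it. Two verifications are needed. On the left-hand side: when $n_0 = n$, the block of rows and columns of $\bB_{\A\Fo}$ indexed by $n_0 + 1, \ldots, n$ is empty, so the $k + 1$ off-diagonal entries disappear and the first two blocks of $\bB_{\A\Fo}$ exactly reproduce $\bB_\A$; hence $\L_q(x_0, \bx; \bB_{\A\Fo})$ coincides with $\L_q(x_0, \bx; \bB_\A)$. On the right-hand side: the indicator $\chi(j > n_0) = \chi(j > n)$ vanishes for every $j \in \{0, 1, \ldots, n - 1\}$, so every additive shift $\chi(j > n_0)(j - n_0)$ drops out of the $q$-Pochhammer exponents, and the supplementary product $\prod_{j=1}^{n - n_0} \frac{1 - q^{(k+1)j}}{1 - q^{k+1}}$ becomes the empty product $1$. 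The remaining expression matches the right-hand side of the corollary verbatim.

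For an independent proof bypassing Theorem \ref{a-f}, the plan would be to mimic Sections~\ref{5} and \ref{6}. Homogenize $\L_q(x_0, \bx; \bB_\A)$ to the polynomial
\[
F(x_0, \bx) = \prod_{j=1}^n \prod_{e=0}^{b-1}(x_j - q^e x_0) \prod_{e=1}^{a + \chi(j > n - m)}(x_0 - q^e x_j) \cdot \prod_{1 \le i < j \le n} \prod_{e=0}^{k-1}(x_j - q^e x_i) \prod_{e=1}^{k}(x_i - q^e x_j),
\]
and apply Theorem~\ref{H-interpol} over $\F = \Q(q)$ with $C_j = \{1, q, \ldots, q^{b + (n-1)k}\}$ for $j \ge 1$ and the multiset
\[
C_0 = \{1\} \cup \bigcup_{\ell = 0}^{n-1}\{q^{k\ell + 1}, q^{k\ell + 2}, \ldots, q^{k\ell + a + \chi(\ell \ge n - m)}\},
\]
of cardinality $na + m + 1$. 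The $m$ new elements $q^{k\ell + a + 1}$ for $\ell \ge n - m$ exactly compensate for the extra degree in $x_0$ coming from the $\chi(j > n - m)$ factor of $F$.

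The heart of this direct approach is the vanishing analysis: show that the Hermite interpolation sum collapses to a single surviving term coming from $c_0 = 1$ and $c_j = q^{b + (j - 1)k}$ in the natural order, exactly as in the $q$-Morris case. The main obstacle would be vanishing at the new $C_0$-elements $q^{k\ell + a + 1}$ with $\ell \ge n - m$: for such $c_0$, the last $m$ indices $j > n - m$ inherit a bad interval enlarged by one (due to the extra factor $(x_0 - q^{a+1}x_j)$), and a pigeonhole count under the pair-separation condition $\beta_j - \beta_i \notin [-k, k-1]$ rules out any admissible assignment of $(\beta_1, \ldots, \beta_n)$ within $[0, b + (n-1)k]$. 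Once vanishing is confirmed, the final evaluation substitutes the unique configuration into the Hermite formula; the $m$ extra factors $(1 - q^{a + b + 1 + (j-1)k})$ contributed by $F$ combine with the modified $\kappa(C_0, \cdot, \cdot)$ coefficients to produce, via routine $q$-Pochhammer telescoping parallel to the $q$-Morris derivation, the $\chi(j \ge n - m)$ shifts in the claimed product.
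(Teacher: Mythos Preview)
Your primary derivation---specializing Theorem~\ref{a-f} at $n_0=n$, checking that $\bB_{\A\Fo}$ collapses to $\bB_\A$ and that the $\chi(j>n_0)$ terms and the auxiliary product vanish---is exactly how the paper obtains the corollary, and your verification is correct. The independent direct argument you sketch in the second half is not in the paper and is unnecessary here, though it is a reasonable outline of how the method of Sections~\ref{5}--\ref{6} would adapt.
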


\noindent
A more general version of this identity, which involves an additional parameter
attached to $b$, was established in Kadell's paper \cite{Kadell1.5}. An
elementary proof was claimed recently by Xin and Zhou \cite{XZ}.
Replacing $k$ by $k+1$ we obtain that Theorem \ref{a-f} is valid for arbitrary
$m\le n$ when $n_0=0$. Although the condition $n\le m+n_0$ is crucial to our
proof given in the next section, it does not seem to be necessary.

\begin{conjecture}
Theorem \ref{a-f} remains valid without the restriction $n\le m+n_0$.
\end{conjecture}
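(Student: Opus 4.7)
The strategy is to follow the template developed in Sections 5 and 6 (Morris and $q$-Morris) and refined in the proof of Theorem \ref{main}. First, I would clear denominators in $\L_q(x_0,\bx;\bB_{\A\Fo})$ to reduce the constant-term problem to extracting the coefficient of $x_0^{d_0}\prod_{i=1}^n x_i^{d_i}$ in a homogeneous polynomial $F(x_0,\bx)$, where each $d_i$ is the corresponding row sum of $\bB_{\A\Fo}$. The polynomial $F$ is a product of linear factors of the form $(x_j-q^e x_i)$ whose exponent ranges encode the block structure of $\bB_{\A\Fo}$: the last $m$ rows (indices $>n-m$) contribute one additional factor $(x_0-q^e x_j)$ each (the Aomoto perturbation), and each pair $i<j$ with $i,j>n_0$ contributes one extra pair $(x_j-q^ex_i)(x_i-q^ex_j)$ (the Forrester perturbation).

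Next I would apply Theorem \ref{H-interpol} with multisets $C_i=\{q^{\alpha}:\alpha\in B_i\}$ of size $d_i+1$, chosen so that the vanishing of factors of $F$ forces $F(\bc)$ to be nonzero for essentially one tuple $\bc\in C_0\times\dots\times C_n$. For $1\le i\le n_0$ I would take the ordinary set $B_i=\{0,1,\dots,d_i\}$ used in the proof of Theorem \ref{main}; for $i>n_0$, the set $B_i$ should reflect the larger $(k+1)$-spacing dictated by the Forrester block; and $C_0$ should extend the Morris-type multiset of Section~5 by including an extra layer that accounts for the $a+1$ rather than $a$ in the last $m$ entries of column $0$. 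Theorem \ref{H-interpol} then expresses the coefficient as a sum over $\bc$ and multiplicity indices $m_i<\omega_i(c_i)$.

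The heart of the argument is a combinatorial step generalizing the chain-of-inequalities in the proof of Theorem \ref{main}: one assumes that the corresponding derivative of $F$ at $\bc$ is nonzero, orders the exponents $\alpha_i$ by a permutation $\pi$, and uses the spacing conditions together with the total degree to pin down $\pi$ as essentially the identity and to determine the $m_i$. The condition $n\le m+n_0$ is what guarantees that the last $n-n_0$ indices (the Forrester block) are contained in the last $m$ indices (the Aomoto block), so that the \emph{same} variables carry both perturbations; this alignment is what keeps the tight configuration unique. Without it the slack in the inequalities can be distributed across several permutations, which is precisely the combinatorial phenomenon standing between the theorem and the full conjecture that follows it.

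I expect the main obstacle to be exactly this bookkeeping: one has to track, simultaneously, which factors of $F$ vanish for each $(i,j)$ under each choice of $\pi$ and of $(m_0,\dots,m_n)$, and prove by slack accounting that only one configuration is tight. Once this step is done, evaluating $\prod_i\kappa(C_i,c_i,m_i)\cdot(\partial^{\sum m_i}F/\prod\partial x_i^{m_i})(\bc)$ at the surviving tuple should be a routine $q$-telescoping: the product collapses to the $q$-Pochhammer expression on the right-hand side, with the extra factor $\prod_{j=1}^{n-n_0}(1-q^{(k+1)j})/(1-q^{k+1})$ arising precisely from the Hermite weights attached to the multiplicity levels in the Forrester block, specializing at $q=1$ to the $\prod(j+1)$ that appears in Forrester's original formula.
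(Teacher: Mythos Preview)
The paper does not prove this statement: it is explicitly labelled a \emph{Conjecture}, and the surrounding text says that the condition $n\le m+n_0$ is ``crucial to our proof'' of Theorem~\ref{a-f} while it ``does not seem to be necessary'' for the identity itself. There is therefore no ``paper's own proof'' to compare against; the relevant question is whether your proposal supplies the missing idea.

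It does not. What you have written is an accurate summary of the machinery behind Theorem~\ref{a-f}---clearing denominators, the Hermite-interpolation formula of Theorem~\ref{H-interpol}, the choice of exponent multisets $C_i$, and the chain-of-inequalities argument that forces a unique surviving tuple---but you yourself identify the point where this machinery breaks: once $n>m+n_0$, the Forrester block is no longer contained in the Aomoto block, the slack in the spacing inequalities is no longer zero, and several permutations $\pi$ (and several multiplicity patterns $(m_0,\dots,m_n)$) can give nonzero contributions. Your proposal names this obstacle but offers no mechanism for handling it: no alternative choice of the multisets $C_i$ that restores uniqueness, no way to sum the multiple contributions in closed form, and no auxiliary identity or rationality/interpolation-in-$k$ argument that would bypass the combinatorics. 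The final paragraph (``Once this step is done\ldots'') presumes exactly the step that is open.

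In short, the proposal is a restatement of the method for Theorem~\ref{a-f} together with a correct diagnosis of why that method does not extend; it is not a proof of the conjecture. A genuine attempt would need either a new choice of evaluation grids that again collapses the sum to a single term when $n>m+n_0$, or an argument showing that the extra terms cancel or can be summed explicitly.
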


\bigskip
\section{Proof of the conjecture of Forrester}
\label{7}

Clearly $\CT[\L_q(x_0,\bx;\bB)]$ equals the coefficient of
$\prod_j x_j^{B_j}$, where $B_j=\sum_i \beta_{ij}$, in the polynomial
\[
F_q(x_0,\bx;\bB):=\prod_{0\le i<j\le n} \Bigg(\prod_{t=0}^{\beta_{ij}-1}(x_j-q^tx_i)
\times \prod_{t=1}^{\beta_{ji}}(x_i-q^tx_j) \Bigg).
\]

\begin{claim}
Suppose that $c_i=q^{\alpha_i}$ for some integers $\alpha_i$ such that 
$F_q(c_0,\bc;\bB)\ne 0$. Let $j>i$. Then $\alpha_j\ge \alpha_i$ implies 
$\alpha_j\ge \alpha_i+\beta_{ij}$, and $\alpha_i>\alpha_j$ implies 
$\alpha_i\ge \alpha_j+\beta_{ji}+1$. Both statements are valid even if
the corresponding entry in $\bB$ is zero. The same is true with
$F_q$ replaced by any of its partial derivatives in which $m_i=m_j=0$. \qed
\label{trivi}
\end{claim}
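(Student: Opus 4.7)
The plan is to isolate the bivariate factors of $F_q(x_0,\bx;\bB)$ that depend only on $x_i$ and $x_j$ for the specified pair $i<j$, and then read off the inequalities from the observation that a binomial $q^{\alpha_j}-q^{\alpha_i+t}$ vanishes precisely when $\alpha_j=\alpha_i+t$. Concretely, I would split
\[
F_q(x_0,\bx;\bB)=P_{ij}(x_i,x_j)\cdot Q(x_0,\bx),
\]
where
\[
P_{ij}(x_i,x_j)=\prod_{t=0}^{\beta_{ij}-1}(x_j-q^tx_i)\prod_{t=1}^{\beta_{ji}}(x_i-q^tx_j)
\]
collects all factors involving only $x_i$ and $x_j$, and $Q$ is the product of the remaining factors. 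Since $F_q(c_0,\bc;\bB)\ne 0$ forces $P_{ij}(c_i,c_j)\ne 0$, I can reason with $P_{ij}$ alone.

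Substituting $c_i=q^{\alpha_i}$ and $c_j=q^{\alpha_j}$, the factor $(x_j-q^tx_i)$ becomes $q^{\alpha_j}-q^{\alpha_i+t}$, which vanishes iff $\alpha_j=\alpha_i+t$; similarly $(x_i-q^tx_j)$ vanishes iff $\alpha_i=\alpha_j+t$. Non-vanishing of $P_{ij}$ therefore forbids $\alpha_j-\alpha_i\in\{0,1,\dots,\beta_{ij}-1\}$ and $\alpha_i-\alpha_j\in\{1,2,\dots,\beta_{ji}\}$. In the case $\alpha_j\ge\alpha_i$ the first restriction yields $\alpha_j-\alpha_i\ge\beta_{ij}$, and in the case $\alpha_i>\alpha_j$ the second yields $\alpha_i-\alpha_j\ge\beta_{ji}+1$, exactly as claimed. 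When $\beta_{ij}=0$ the forbidden set is empty, so the first inequality is automatic from $\alpha_j\ge\alpha_i$; the $\beta_{ji}=0$ case is symmetric.

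For the partial-derivative variant, the key observation is that $P_{ij}$ depends only on $x_i$ and $x_j$, while the hypothesis $m_i=m_j=0$ restricts differentiation to variables $x_k$ with $k\notin\{i,j\}$. Since $\partial_{x_k}P_{ij}=0$ for every such $k$, repeated application of the Leibniz rule gives
\[
\frac{\partial^{m_0+\cdots+m_n}F_q}{\partial x_0^{m_0}\cdots\partial x_n^{m_n}}
=P_{ij}(x_i,x_j)\cdot\frac{\partial^{m_0+\cdots+m_n}Q}{\partial x_0^{m_0}\cdots\partial x_n^{m_n}},
\]
so non-vanishing of the left-hand side at $(c_0,\bc)$ forces $P_{ij}(c_i,c_j)\ne 0$ and the preceding paragraph applies verbatim. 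I do not foresee a genuine obstacle: the claim is essentially a bookkeeping statement recording which pairs of exponents $(\alpha_i,\alpha_j)$ can coexist without collapsing one of the Vandermonde-type factors of $F_q$, and the partial-derivative extension is immediate once one notes that $P_{ij}$ factors out of the differentiation.
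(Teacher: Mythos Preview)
Your argument is correct and is exactly the reasoning the paper has in mind; note that the paper gives no proof at all (the claim is stated with an immediate \qed), treating the observation about the factors $(x_j-q^tx_i)$ and $(x_i-q^tx_j)$, and the fact that $P_{ij}$ survives differentiation in other variables, as self-evident.
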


\noindent
We are to apply Theorem \ref{H-interpol} with the polynomial
$F=F_q(.;\bB_{\A\Fo})$. 
As in Section 5, we will assume that the parameters $a,b,k$ are positive 
integers and leave the rest to the reader.

\subsection{The choice for the multisets $C_i$}
\label{Aichoice}

Write $\gamma_i=\beta_{in}$ for $0\le i<n$ and let $\De_t=\sum_{i=0}^t\gamma_i$.
Thus, 
\[
\gamma_0=b,\ \gamma_1=\dots=\gamma_{n_0}=k,\ 
\gamma_{n_0+1}=\dots=\gamma_{n-1}=k+1
\]
and $\beta_{ij}=\gamma_{\min\{i,j\}}$ for $1\le i\ne j\le n$.
Consider the intervals $I_t=[\De_t-\gamma_t+1,\De_t]=[\De_{t-1}+1,\De_t]$, where
here and thereafter $[u,v]$ stands for the set of integers $\ell$ satisfying
$u\le \ell\le v$. The intervals $I_0:=[0,b],I_1\dots,I_{n-1}$ 
are mutually disjoint.
The multisets $C_i$ are defined in the form
$C_i=\{q^{\alpha}\mid \alpha\in A_i\}$, where for $1\le j\le n$
\[
A_j=\{0\}\cup \bigcup_{t=0}^{n-1} 
\left[\De_{t}-\gamma_{\min\{t,j\}}+1,\De_t  \right]\subseteq 
\bigcup_{t=0}^{n-1}I_t=[0,\Delta_{n-1}]
\]
is and ordinary set and
\[
A_0= \{0\}\cup \bigcup_{t=0}^{n-1} 
\left[ \De_t-b+1, \De_t-b+ \beta_{t+1, 0}  \right] 
\]
is a multiset. 
Then $|C_i|=|A_i|=B_i+1$ holds for every $0\le i\le n$.
We are to show that 
\[
\frac{\partial^{m_0+\dots+m_n}F}{\partial x_0^{m_0}\dots
\partial x_n^{m_n}}(c_0,\ldots,c_n)= 
\frac{\partial^{m_0}F}{\partial x_0^{m_0}}(c_0,\ldots,c_n)=0
\]
for all but one selection of elements $c_i\in {C_i}$ and 
multiplicities $m_i<\omega_i(c_i)$, namely when $c_0=1$, $c_i=q^{\Delta_{i-1}}$
for $1\le i\le n$, and all the multiplicities are zero.

\subsection{The combinatorics}

Consider such a selection and write $c_i=q^{\alpha_i}$. Note that 
$\omega_1(c_1)=\dots=\omega_n(c_n)=\omega_0(q^0)=1$.
The above statement 
is verified by the juxtaposition of the following two lemmas.

\begin{lemma}
Let $\alpha_0=0$. If $F(c_0,c_1,\dots,c_n)\ne 0$, 
then $\alpha_i=\Delta_{i-1}$ for every $1\le i\le n$.
\label{first}
\end{lemma}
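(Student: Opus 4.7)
My plan combines the spacing constraints provided by Claim \ref{trivi} with the specific structure of the multisets $A_j$, via a telescoping argument.

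First, I apply Claim \ref{trivi} to the pair $(0,j)$ for each $j\ge 1$. Since every $\alpha_j\in A_j$ is a non-negative integer, $\alpha_j\ge 0=\alpha_0$; the equality case would force the contradiction $0\ge\beta_{0,j}=b>0$, so $\alpha_j\ge b$ for every $j\ge 1$. The same claim applied to any pair $1\le i<j\le n$ shows that $\alpha_1,\dots,\alpha_n$ are pairwise distinct. Let $\sigma$ be the unique permutation of $\{0,1,\dots,n\}$ with $\alpha_{\sigma(0)}<\alpha_{\sigma(1)}<\cdots<\alpha_{\sigma(n)}$; necessarily $\sigma(0)=0$.

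Second, a direct inspection of the matrix $\bB_{\A\Fo}$ reveals the key simplification $\beta_{u,v}=\gamma_{\min(u,v)}$ in all entries relevant for our telescoping: the cases $u=0,\,v\ge 1$, and $u,v\ge 1$ with $u\ne v$. Consequently, Claim \ref{trivi} applied to consecutive pairs in the sorted order yields
\[
\alpha_{\sigma(i+1)}-\alpha_{\sigma(i)}\ \ge\ \gamma_{u_i}+\chi_i,\qquad u_i:=\min(\sigma(i),\sigma(i+1)),\ \chi_i:=\chi(\sigma(i)>\sigma(i+1)).
\]
Summing for $i=0,\dots,n-1$ and using $\alpha_{\sigma(n)}\le\Delta_{n-1}$ produces the upper bound $\sum_i(\gamma_{u_i}+\chi_i)\le\Delta_{n-1}$. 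An induction on $n$, in which the value $n$ is peeled off from the walk (either as the final entry, or by folding the two edges incident to an interior occurrence of $n$ into a single edge), establishes the matching lower bound $\sum_i(\gamma_{u_i}+\chi_i)\ge\Delta_{n-1}$; the arithmetic ultimately reduces to the inequality $\gamma_{\sigma(r\pm 1)}+1\ge\gamma_{n-1}$, which holds because $\gamma_j\ge k$ for $j\ge 1$ and $\gamma_{n-1}\le k+1$. Both estimates being equalities, each $\alpha_{\sigma(r)}=\sum_{i<r}(\gamma_{u_i}+\chi_i)$ is uniquely determined by $\sigma$.

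The main obstacle is that non-identity $\sigma$ can also saturate this combinatorial equality: already at $n=3,\ n_0=1$, the permutation $(0,3,1,2)$ achieves the same total as the identity, so the combinatorial bound alone cannot force $\sigma=\id$. Here the structure of the $A_j$ enters decisively: for $j\le n_0$, the set $A_j$ omits exactly the left endpoints $\Delta_{t-1}+1$ of the long intervals $I_t$ with $t>n_0$. A case analysis tracking the position of $n$ in $\sigma$, the locations of any descents, and which of the resulting $\alpha_{\sigma(r)}$ correspond to indices $\sigma(r)\le n_0$ shows that every non-identity tight $\sigma$ forces some such $\alpha_{\sigma(r)}$ to land precisely at a forbidden left endpoint, contradicting $\alpha_{\sigma(r)}\in A_{\sigma(r)}$. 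This forces $\sigma=\id$, whence the tight spacings $\alpha_i-\alpha_{i-1}=\gamma_{i-1}$ yield $\alpha_i=\Delta_{i-1}$ for $1\le i\le n$, as required.
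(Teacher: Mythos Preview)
Your overall strategy---telescope the spacing inequalities from Claim~\ref{trivi}, pin down the $\alpha$'s from tightness, then use the structure of the $A_j$ to eliminate non-identity permutations---is a valid alternative to the paper's argument, but the write-up has a genuine error in the induction and leaves the decisive step unproved.

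\textbf{The induction for the lower bound is wrong as stated.} Your claim that the arithmetic always reduces to $\gamma_{\sigma(r\pm1)}+1\ge\gamma_{n-1}$ is false. If $\sigma(n)=n$ (end peel), removing $n$ drops the weight by exactly $\gamma_{\sigma(n-1)}$, with no ``$+1$''; likewise if $\sigma(r)=n$ is interior and $\sigma(r-1)>\sigma(r+1)$, folding drops the weight by $\gamma_{\sigma(r-1)}$. Both can be strictly less than $\gamma_{n-1}$: take $n=3$, $n_0=1$, $\sigma=(0,2,1,3)$; removing $3$ decreases the sum by $\gamma_1=k<k+1=\gamma_2$. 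The lower bound is nevertheless true, by a one-line direct count: for $i\ge 1$ the $i$th term equals $k$ precisely when $\sigma(i)\le n_0$ (necessarily an ascent), and there are at most $n_0$ such positions, so $\sum_{i}(\gamma_{u_i}+\chi_i)\ge b+(n-1)k+(n-1-n_0)=\Delta_{n-1}$.

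\textbf{The case analysis is asserted, not done.} It can be completed: for tight $\sigma$ one has $\alpha_{\sigma(r)}=b+(r-1)k+|\{1\le i<r:\sigma(i)>n_0\}|$; taking $r^*$ to be the last position with $\sigma(r^*)\le n_0$ gives $\alpha_{\sigma(r^*)}=\Delta_{r^*-1}+1$, which is a forbidden point of $A_{\sigma(r^*)}$ whenever $r^*>n_0$, while $r^*=n_0$ forces $\sigma=\id$. But none of this is in your text.

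By contrast, the paper first invokes Lemma~\ref{I_t}, which already encodes the $A_j$ constraint and forces $\alpha_{\pi(i)}\in I_{i-1}$. This replaces your global bound $\alpha_{\sigma(n)}\le\Delta_{n-1}$ by the much sharper local bound $\alpha_{\pi(n_0+1)}\le\Delta_{n_0}$, so a short two-stage telescoping (first to position $n_0+1$, then beyond) pins down $\pi=\id$ directly, with no separate lower bound and no enumeration of tight permutations.
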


\begin{lemma}
If $\alpha_0\ne 0$, then
$({\partial^{m_0}F}/{\partial x_0^{m_0}})(c_0,\ldots,c_n)=0$.
\label{second}
\end{lemma}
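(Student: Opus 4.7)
My plan is to closely mirror the Morris-case argument of Section~\ref{5}, extended with the extra bookkeeping forced by the block structure of $\bB_{\A\Fo}$. The first observation is that every factor of $F=F_q(x_0,\bx;\bB_{\A\Fo})$ involving $x_0$ is linear in $x_0$, so the product rule expresses $\partial^{m_0}F/\partial x_0^{m_0}$ as $m_0!$ times a sum, indexed by the $m_0$-element subsets $S$ of these $x_0$-linear factors, of the product of their $x_0$-derivatives (scalars), the remaining $x_0$-linear factors, and all $x_0$-independent factors of $F$. A summand evaluates to a nonzero value at $(c_0,\bc)$ only when (a) every $x_0$-independent factor $(x_j-q^tx_i)$ with $1\le i<j\le n$ is nonzero at $\bc$, and (b) $S$ contains every $x_0$-linear factor of $F$ that vanishes at $(c_0,\bc)$. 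So my reduction is: show that under hypothesis (a), the number of $x_0$-linear factors that vanish at $(c_0,\bc)$ strictly exceeds $m_0$.

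For each $j\in\{1,\dots,n\}$, among the $b+\beta_{j0}$ $x_0$-linear factors attached to the pair $(0,j)$, exactly one vanishes at $(c_0,\bc)$ if and only if $\alpha_j\in[\alpha_0-\beta_{j0},\alpha_0+b-1]$; I call such $j$ \emph{dangerous}, and the goal becomes exhibiting at least $m_0+1$ dangerous indices. I will use Claim~\ref{trivi} to deduce from (a) that $\alpha_1,\dots,\alpha_n$ are distinct elements of $[0,\De_{n-1}]$, where $\De_t=b+kt+\max\{t-n_0,0\}$, and that, read in the increasing order $\alpha_{\pi(1)}<\dots<\alpha_{\pi(n)}$, they satisfy $\alpha_{\pi(i+1)}-\alpha_{\pi(i)}\ge k$, with the strict lower bound $k+1$ whenever both $\pi(i),\pi(i+1)>n_0$.

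Writing $A_0\setminus\{0\}=\bigcup_{t=0}^{n-1}I^*_t$ with $I^*_t=[\De_t-b+1,\De_t-b+\beta_{t+1,0}]$, I will first verify that both endpoints of $I^*_t$ strictly increase in $t$ (using $\gamma_{t+1}\ge k\ge1$ together with the monotonicity $\beta_{t+1,0}\le\beta_{t+2,0}$), so $\{t:\alpha_0\in I^*_t\}$ is a contiguous block $\{u_0,\dots,u_0+m_0\}$ of length at least $m_0+1$; without loss of generality it has length exactly $m_0+1$, after harmlessly enlarging $m_0$ if the multiplicity is larger. The extreme memberships $\alpha_0\in I^*_{u_0}\cap I^*_{u_0+m_0}$ give the key inequalities $\alpha_0+b-1\ge\De_{u_0+m_0}$ and $\alpha_0-\beta_{j0}\le\De_{u_0}$ for every $j$. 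I then run a pigeonhole: assuming for contradiction that at most $m_0$ indices are dangerous, the remaining $\ge n-m_0$ of them split into a \emph{left} group with $\alpha_j\le\alpha_0-\beta_{j0}-1$ and a \emph{right} group with $\alpha_j\ge\alpha_0+b$. The spacing constraints together with the endpoint inequalities will cap the left group at $u_0$ and the right group at $n-u_0-m_0-1$, summing to $n-m_0-1$, which is the desired contradiction.

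\paragraph{Main obstacle.} The delicate step is the final pigeonhole, which has to track two independent $+1$ corrections: one in the spacing (jumping from $k$ to $k+1$ at indices $>n_0$) and one in $\beta_{j0}$ (jumping from $a$ to $a+1$ at indices $>n-m$). The hypothesis $n\le m+n_0$, i.e.\ $n-m\le n_0$, is exactly the condition that every $j>n_0$ also satisfies $j>n-m$, so the two corrections align: every index demanding the tighter spacing budget automatically carries the larger danger budget, and the accounting closes. Outside this range the balance breaks and the proof fails to conclude, consistent with Theorem~\ref{a-f} remaining only conjectural for $n>m+n_0$.
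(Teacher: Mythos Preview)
Your overall architecture matches the paper's proof of Lemma~\ref{second} almost exactly: reduce to counting ``dangerous'' indices, locate the contiguous block $\{u,\dots,u+m_0\}$ of intervals $I_t^*$ containing $\alpha_0$, and then run a left/right pigeonhole. The identification of the role of $n\le m+n_0$ in the ``Main obstacle'' paragraph is also on target.

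However, the specific tools you list are not quite strong enough to deliver the two caps, and this is where the paper's argument is more delicate than your sketch suggests. First, for the right cap you cannot get by with spacing alone: the interval $[\De_{u+m_0}+1,\De_{n-1}]$ has length $\sum_{t>u+m_0}\gamma_t$, which for $u+m_0\ge n_0$ equals $(n-1-u-m_0)(k+1)$, so a simple ``gap $\ge k$'' bound allows too many points. The paper instead uses Lemma~\ref{I_t}, whose proof crucially exploits the membership $\alpha_j\in A_j$ (not merely $\alpha_j\in[0,\De_{n-1}]$) to force at most one $\alpha_j$ per interval $I_t$. You never invoke this structure. Second, your stated spacing inequality---gap $\ge k+1$ only when \emph{both} $\pi(i),\pi(i+1)>n_0$---is strictly weaker than what is needed for the left cap when $u>n_0$. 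If the indices $\le n_0$ are interspersed among those $>n_0$, your bound gives only $\alpha_{\nu(u+1)}\ge uk$, which falls short of $\De_u-b=uk+(u-n_0)$. The paper uses the sharper bound $\alpha_{\nu(i+1)}-\alpha_{\nu(i)}\ge\gamma_{\nu(i)}$ (valid regardless of $\nu(i+1)$, since a descent contributes $\gamma_{\nu(i+1)}+1\ge\gamma_{\nu(i)}$), yielding $\alpha_{\nu(u+1)}\ge\sum_{i\le u}\gamma_{\nu(i)}\ge\De_u-b$; equality then forces $\{\nu(1),\dots,\nu(u)\}\supseteq\{1,\dots,n_0\}$ and hence $\nu(u+1)>n_0\ge n-m$, putting $\alpha_{\nu(u+1)}$ back in the dangerous zone.

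Both points are easy to patch---just upgrade your spacing to $\ge\gamma_{\nu(i)}$ and cite Lemma~\ref{I_t}---but as written the proposal has genuine gaps at precisely the steps where the proof earns its keep.
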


One key to each is the following consequence of Claim \ref{trivi}.

\begin{lemma}
Suppose that $({\partial^{m_0}F}/{\partial x_0^{m_0}})(c_0,\ldots,c_n)\ne 0$.
Then for every $1\le t\le n-1$ there is at most one index $1\le i\le n$
such that $\alpha_i\in I_t$. 
\label{I_t}
\end{lemma}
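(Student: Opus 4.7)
The plan is to argue by contradiction. Suppose that for some $1\le t\le n-1$ there exist indices $1\le i<j\le n$ with $\alpha_i,\alpha_j\in I_t$. Since the only partial derivative taken is in $x_0$, we have $m_i=m_j=0$, so Claim~\ref{trivi} applies to $\partial^{m_0}F/\partial x_0^{m_0}$ at $(c_0,\dots,c_n)$. Using $\beta_{ij}=\beta_{ji}=\gamma_{\min\{i,j\}}=\gamma_i$, the claim yields
\[
|\alpha_j-\alpha_i|\ge \gamma_i.
\]
On the other hand $I_t$ consists of $\gamma_t$ consecutive integers, so $|\alpha_j-\alpha_i|\le \gamma_t-1$. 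Combining gives $\gamma_i<\gamma_t$. Since $\gamma_r\in\{k,k+1\}$ for every $r\ge 1$, the only option is $\gamma_i=k$ and $\gamma_t=k+1$; in particular $i\le n_0<t$, hence $i<t$.

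Next I would pin down $\alpha_i$ using the construction of $A_i$ from Subsection~\ref{Aichoice}. Because $i<t$, the intersection $A_i\cap I_t$ is the upper segment $[\Delta_t-\gamma_i+1,\Delta_t]=[\Delta_t-k+1,\Delta_t]$, so $\alpha_i\ge \Delta_t-k+1$. I now split on the sign of $\alpha_j-\alpha_i$. If $\alpha_j>\alpha_i$, Claim~\ref{trivi} gives $\alpha_j\ge \alpha_i+\gamma_i\ge \Delta_t+1$, contradicting $\alpha_j\in I_t$. If $\alpha_j<\alpha_i$, the strict form of Claim~\ref{trivi} reads $\alpha_i\ge \alpha_j+\gamma_i+1$, so $\alpha_j\le \alpha_i-k-1\le \Delta_t-k-1=\Delta_{t-1}$, where we used $\Delta_t-\Delta_{t-1}=\gamma_t=k+1$. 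This contradicts $\alpha_j\ge \Delta_{t-1}+1$.

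The whole argument is a short piece of arithmetic built on Claim~\ref{trivi}. The only delicate point is that the gap constraint $|\alpha_j-\alpha_i|\ge \gamma_i$ falls just one short of fitting inside $I_t$ when $\gamma_t=k+1$, and the extra $+1$ in the strict-inequality case of Claim~\ref{trivi} is precisely what absorbs the remaining slack. I foresee no serious obstacle here; the genuine combinatorial work of Section~\ref{7} is carried out in Lemmas~\ref{first} and~\ref{second}, for which Lemma~\ref{I_t} plays only the preparatory role of ruling out multiple occupation of a single interval $I_t$.
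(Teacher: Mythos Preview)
Your proof is correct and follows essentially the same approach as the paper's: both combine the gap constraint from Claim~\ref{trivi} with the computation of $A_i\cap I_t=[\Delta_t-k+1,\Delta_t]$ for $i\le n_0<t$ to reach a contradiction. The only cosmetic difference is that the paper pins down $\alpha_i=\Delta_t-k$ and $\alpha_j=\Delta_t$ exactly and then observes $\alpha_i\notin A_i$, whereas you use the lower bound $\alpha_i\ge\Delta_t-k+1$ and case-split on the sign of $\alpha_j-\alpha_i$; the underlying arithmetic is identical.
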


\begin{proof}
Assume that, on the contrary, there is a pair $1\le i\ne j\le n$ such that
$\alpha_i,\alpha_j\in I_t$. Let $\alpha_j\ge \alpha_i$, then it follows from 
Claim \ref{trivi} that $\alpha_j-\alpha_i\ge k$.
The length of $I_t$ is $\gamma_t\in \{k,k+1\}$. Thus, it must be 
$\gamma_t=k+1$, $\alpha_i=\De_t-k$ and $\alpha_j=\De_t$. Consequently,
$t> n_0$, $i<j$ and $i\le n_0$. 
Therefore
$\De_t-\gamma_{\min\{t,i\}}+1=\De_t-k+1$ and $\alpha_i\not\in A_i$, a
contradiction.
\end{proof}

\bigskip
\noindent{\em Proof of Lemma \ref{first}.}
For every $1\le i\le n$ we have $\alpha_i\ge \alpha_0$, 
therefore $\alpha_i\ge \beta_{0i}=b$ by Claim \ref{trivi}.
Moreover, $k>0$ implies that $\alpha_1,\dots,\alpha_n$ are all distinct,
thus it follows from Lemma \ref{I_t} that each of the intervals $I_0,
I_1,\dots,I_{n-1}$ contains precisely one of them. Let $\pi\in\Symm_n$ denote
the unique permutation for which $\alpha_{\pi(1)}<\dots<\alpha_{\pi(n)}$,
then $\alpha_{\pi(i)}\in I_{i-1}$.
By Claim \ref{trivi} we have 
\[
\alpha_{\pi(i+1)}\ge \alpha_{\pi(i)}+\beta_{\pi(i),\pi(i+1)}+\chi(\pi(i)>\pi(i+1)).
\]
Consequently,
\[
\alpha_{\pi(n_0+1)}\ge b+kn_0+
\sum_{i=1}^{n_0}\chi(\pi(i)>\pi(i+1))\ge \Delta_{n_0}+
\sum_{i=1}^{n_0}\chi(\pi(i)>\pi(i+1)).
\]
Since $\alpha_{\pi(n_0+1)}\le \Delta_{n_0}$, it follows that $\alpha_{\pi(1)}=b$,
$\pi(1)<\dots<\pi(n_0+1)$, and $\beta_{\pi(i),\pi(i+1)}=k$ for $1\le i\le n_0$.
This in turn implies that $\pi(n_0)\le n_0$, thus $\pi(i)=i$ and 
$\alpha_i=\De_{i-1}$ for $1\le i\le n_0$.

Now for $n_0<i<n$ we have $\pi(i),\pi(i+1)>n_0$ and thus
$\beta_{\pi(i),\pi(i+1)}=k+1$. Restricting $\pi$ to the set $[n_0+1,n]$ and
starting with $\alpha_{\pi(n_0+1)}= \Delta_{n_0}$, a similar argument 
completes the proof.
\qed

\bigskip
\noindent{\em Proof of Lemma \ref{second}.}
Assume that, contrary to the statement, 
$({\partial^{m_0}F}/{\partial x_0^{m_0}})(c_0,\ldots,c_n)\ne 0$.
Write $S_t=\left[ \De_t-b+1, \De_t-b+ \beta_{t+1, 0}  \right] $. 
Since $\alpha_0\ne 0$ and $m_0<\omega_0(c_0)$, 
there is an index $0\le u\le n-\omega_0(c_0)$ 
such that $\alpha_0\in S_u\cap S_{u+1}\cap \dots \cap S_{u+m_0}$. That is,
\[
\De_{u+m_0}-b+1\le \alpha_0\le \De_u-b+ \beta_{u+1, 0}.
\]
Accordingly, if $\alpha_j$ lies in the interval
\[ 
T_{uj}=[\De_u-b+ \beta_{u+1, 0}-\beta_{j0},\De_{u+m_0}] 
\]
for some $1\le j\le n$, then
$\alpha_0-\beta_{j0}\le \alpha_j\le \alpha_0+\beta_{0j}-1$
and there is a term of the form $x_j-q^tx_0$ or $x_0-q^tx_j$ in $F$
which attains 0 when evaluated at the point $(c_0,\bc)$.
There cannot be more than $m_0$ such terms. It is implied by Lemma \ref{I_t}
that at most $n-1-u-m_0$ of the distinct numbers $\alpha_1,\dots,\alpha_n$
can lie in the interval $[\De_{u+m_0}+1,\De_{n-1}]$.

It follows that at least $u+1$ of the numbers $\alpha_j$ satisfy
$\alpha_j\le \De_u-b+ \beta_{u+1, 0}-\beta_{j0}-1$. This is clearly impossible
if $u+1\le n-m$, for then $\De_u-b+ \beta_{u+1, 0}-\beta_{j0}-1\le uk-1$ in
view of $n-m\le n_0$, and on the other hand the difference between any
two such $\alpha_j$ is at least $k$ in view of Claim \ref{trivi}. 
Thus, $u\ge n-m$ and $\beta_{u+1, 0}=a+1$. Consider
\[
\alpha_{\nu(1)}<\dots<\alpha_{\nu(u+1)}\le \De_u-b+ \beta_{u+1, 0}-\beta_{\nu(u+1),0}-1
\le \De_u-b.
\]
If $u\le n_0$, then it must be $\alpha_{\nu(i)}=(i-1)k$ and 
$\nu(1)<\dots<\nu(u+1)$, but then $\nu(u+1)\ge u+1>n-m$, $\beta_{\nu(u+1),0}=a+1$,
implying $\alpha_{\nu(u+1)}\in T_{u,\nu(u+1)}$, which is absurd.
This means that $u\ge n_0+1$. 
It is easy to see that $\alpha_{\nu(i+1)}-\alpha_{\nu(i)}\ge \gamma_{\nu(i)}$
for $i\le u$, thus  
$\alpha_{\nu(u+1)}\ge \sum_{i=1}^u\gamma_{\nu(i)}\ge \Delta_u-b.$
Therefore $\sum_{i=1}^u\gamma_{\nu(i)}= \Delta_u-b$, which implies that
$\{\nu(1),\dots,\nu(u)\}\supseteq \{1,\dots,n_0\}$. Consequently,
$\nu(u+1)\ge n_0+1>n-m$, which leads to a contradiction as before.
\qed

\subsection{The computation}

It only remains to evaluate
\begin{equation}
\frac{F_q(q^0,q^{\De_0},\dots,q^{\De_{n-1}};\bB)}{\psi_0\psi_1\dots\psi_n},
\label{fraction}
\end{equation}
where
\[
\psi_j={\prod}_{\alpha\in A_j\setminus\{\De_{j-1}\}}(q^{\De_{j-1}}-q^\alpha)
\]
for $j=1,\dots n$, and with the shorthand notation $\De_u^v=
\gamma_u+\dots+\gamma_v=\De_v-\De_{u-1}$,
\begin{equation}
\psi_0=\prod_{t=0}^{n-1}\prod_{\alpha=\De_1^t+1}^{\De_1^t+\beta_{t+1,0}}(1-q^\alpha)=
\prod_{j=1}^n \left[ \De_1^{j-1}+1,\De_1^{j-1}+\beta_{j0} \right]_q.
\label{denum_0}
\end{equation}
From now on, $[u,v]_q:=(1-q^u)\dots(1-q^v)=(q)_v/(q)_{u-1}$, with
$[u,u]_q$ abbreviated as $[u]_q$. Both the numerator and the denumerator
in (\ref{fraction})
is the product of factors in the form $\pm q^u(1-q^v)$ with some non-negative
integers $u,v$. More precisely, collecting factors of a similar nature
together we find that the numerator is the product of the factors
\begin{equation}
(-1)^{\gamma_0}\times 
q^{1+\dots+(\gamma_0-1)}\times 
\left[ \De_1^{j-1}+1,\De_0^{j-1}+\beta_{j0} \right]_q
\quad \mathrm{for} \quad 1\le j\le n,
\label{num_0j}
\end{equation}
\begin{equation}
(-1)^{\gamma_i}\times 
q^{\De_{i-1}+\dots+(\De_{i-1}+\gamma_i-1)}\times 
\left[ \De_i^{j-1}-\gamma_i+1,\De_i^{j-1} \right]_q
\quad \mathrm{for} \quad 1\le i< j\le n,
\label{num_ij(1)}
\end{equation}
and 
\begin{equation}
q^{\gamma_i\De_{i-1}}\times
\left[ \De_i^{j-1}+1,\De_i^{j-1}+\gamma_i \right]_q
\quad \mathrm{for} \quad 1\le i< j\le n.
\label{num_ij(2)}
\end{equation}
In the denominator, besides (\ref{denum_0}) we have the factors
\begin{equation}
(-1)\times 
\left[ \De_{j-1} \right]_q\times
\psi_{j_<} \times \psi_{j_=} \times \psi_{j_>}
\quad \mathrm{for} \quad 1\le j\le n,
\label{denum_j}
\end{equation}
where
\begin{equation}
\psi_{j_<}= \prod_{t=0}^{j-2}  (-1)^{\gamma_t}\times 
q^{(\De_{t}-\gamma_t+1)+\dots+\De_{t}}\times 
\left[ \De_{t+1}^{j-1},\De_{t+1}^{j-1}+\gamma_t-1 \right]_q,
\label{denum_j<}
\end{equation}
\begin{equation}
\psi_{j_=}=   (-1)^{\gamma_{j-1}-1}\times 
q^{(\De_{j-1}-\gamma_{j-1}+1)+\dots+(\De_{j-1}-1)}\times 
\left[ 1,\gamma_{j-1}-1 \right]_q,
\label{denum_j=}
\end{equation}
and
\begin{equation}
\psi_{j_>}= \prod_{t=j}^{n-1}  
q^{\gamma_j\De_{j-1}}\times
\left[ \De_j^{t}-\gamma_j+1,\De_j^{t} \right]_q.
\label{denum_j>}
\end{equation}

Now the powers of $-1$ and $q$ cancel out due to the simple identity
\[
n\gamma_0 + \sum_{1\le i<j\le n}\gamma_i =
n + \sum_{0\le t<j-1\le n-1}\gamma_t + \sum_{1\le j\le n}(\gamma_{j-1}-1)
\]
and the somewhat more subtle
\begin{multline*}
n \binom{\gamma_0}{2} +
\sum_{1\le i<j\le n} \left( 2\gamma_i\De_{i-1} + \binom{\gamma_i}{2} \right) \\=
\sum_{0\le t<j-1\le n-1} \left( \gamma_t\De_{t} - \binom{\gamma_t}{2} \right) +
\sum_{j=1}^n \left( (\gamma_{j-1}-1)\De_{j-1} - \binom{\gamma_{j-1}-1}{2} \right) +
\sum_{0\le j-1<t\le n-1} \gamma_j\De_{j-1}.
\end{multline*}

It remains to deal with the factors of the form $[u,v]_q$. Those from
(\ref{num_ij(1)}) and (\ref{denum_j>}) cancel out. Those from 
(\ref{num_0j}) and (\ref{denum_0}) yield
\begin{equation}
\prod_{j=1}^n \frac{(q)_{\De_{0}^{j-1}+\beta_{j0}}}{(q)_{\De_{1}^{j-1}+\beta_{j0}}} =
\prod_{j=0}^{n-1}
\frac{(q)_{a+b+kj+\chi(j>n_0)(j-n_0)+\chi(j\ge n-m)}}
{(q)_{a+kj+\chi(j>n_0)(j-n_0)+\chi(j\ge n-m)}}.
\label{eleje}
\end{equation}
As for the rest, the contribution 
from (\ref{num_ij(2)}) and (\ref{denum_j<}) with the substitution $t+1=i$ gives
\begin{align}
\prod_{1\le i<j\le n} \frac{\left[ \De_i^{j-1}+1,\De_i^{j-1}+\gamma_i \right]_q}
{\left[ \De_{i}^{j-1},\De_{i}^{j-1}+\gamma_{i-1}-1 \right]_q}&=
\prod_{1\le i<j\le n} \frac{\left[ \De_i^{j-1},\De_i^{j-1}+\gamma_i \right]_q
\cdot \left[ \De_{i}^{j-1}+\gamma_{i-1} \right]_q}
{\left[ \De_{i}^{j-1} ,\De_{i}^{j-1}+\gamma_{i-1} \right]_q
\cdot \left[ \De_{i}^{j-1} \right]_q} \notag \\
&= \prod_{j=2}^n \frac{\left[ \De_1^{j-1},\De_1^{j-1}+\gamma_1 \right]_q}
{\left[ \De_{1}^{j-1} ,\De_{1}^{j-1}+\gamma_{0} \right]_q} \times \Psi \times
\prod_{1\le i<j\le n} \frac{\left[ \De_{i-1}^{j-1} \right]_q}
{\left[ \De_{i}^{j-1} \right]_q} \notag \\
&=\prod_{j=2}^n \frac{(q)_{\De_{1}^{j-1}+\gamma_1}}{(q)_{\De_{1}^{j-1}+\gamma_0}}
\times \Psi \times
\prod_{j=2}^n \frac{\left[ \De_{j-1} \right]_q}{1-q^{\gamma_{j-1}}} \label{utolso}
\end{align}
in the first place, where the factor
\[
\Psi=\prod_{j=n_0+2}^n \left[ \De_{n_0+1}^{j-1}+\gamma_{n_0+1} \right]_q =
\prod_{j=2}^{n-n_0} (1-q^{(k+1)j})
\]
only occurs when $n_0>0$. Combining (\ref{utolso}) with the contribution
of the factors $\left[ \De_{j-1} \right]_q=1-q^{\De_{j-1}}$ from (\ref{denum_j})
and the factors $\left[ 1,\gamma_{j-1}-1 \right]_q=(q)_{\gamma_{j-1}-1}$
from (\ref{denum_j=}), shifting indices we obtain
\[
\prod_{j=1}^{n-1} \frac{(q)_{\De_{1}^{j}+\gamma_1}}{(q)_{\De_{1}^{j}+\gamma_0}} \times
\prod_{j=0}^{n-1} \frac{1}{(q)_{\gamma_j}} \times
\left( \prod_{j=2}^{n-n_0} (1-q^{(k+1)j}) \right)^{\chi(n_0>0)},
\]
in agreement with
\begin{equation}
\frac{(q)_{kj+\chi(j>n_0)(j-n_0)+k}}
{(q)_{b+kj+\chi(j>n_0)(j-n_0)}(q)_k}\times
\prod_{j=1}^{n-n_0} \frac{1-q^{(k+1)j}}{1-q^{k+1}}.
\label{vege}
\end{equation}
Putting together (\ref{eleje}) and (\ref{vege}) completes the proof 
of Theorem \ref{a-f}.

\bigskip
\noindent
{\em Remark.} For all the identities considered in this paper, the formulas
exhibit, apart from some minor deviations, quite a similar pattern,
and it is more or less clear from the above argument, why it is so. 
We do not elaborate on this here, but the motivated reader may come up
with other families of matrices $\bB$ for which a similar proof strategy
might work. We believe that the details given above can be useful in such
a quest.

\subsection{A rationality result}

It is possible to prove Theorem \ref{a-f} based solely on Lemma \ref{interpol};
in fact this is how our result was originally obtained. It involves the
same combinatorics applied when $k\ge a+1$, in which case $A_0$ is an
ordinary set. The extension of the result that includes all non-negative 
integers $k$ depends on the following rationality lemma, inspired by
\cite[Proposition 2.4]{GLXZ}. 

\begin{lemma}
Fix nonnegative integers $r_i,s_i$ for $1\le i\le n$, satisfying
$\sum r_i=\sum s_i$.
There is a rational function $Q=Q(z)\in \Q(q)(z)$ that depends only on
$n$ and the numbers $r_i,s_i$, such that
\[
\CT\Bigg[ \frac{x_1^{r_1}\dots x_n^{r_n}}{x_1^{s_1}\dots x_n^{s_n}}\D_q(\bx;k) \Bigg]=
Q(q^k) \frac{(q)_{nk}}{(q)_k^n}.
\] 
\label{rational}
\end{lemma}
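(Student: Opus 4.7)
As in earlier sections, the constant term equals the coefficient of $\prod_i x_i^{d_i}$ in the polynomial
\[
G(\bx) := \prod_{i=1}^n x_i^{r_i}\cdot \prod_{1\le i<j\le n}\left(\prod_{t=0}^{k-1}(x_j-q^tx_i)\prod_{t=1}^k(x_i-q^tx_j)\right),
\]
where $d_i := (n-1)k + s_i$. Since $\sum r_i=\sum s_i$, we have $\deg G=\sum d_i$, so Lemma~\ref{interpol} applies with $C_i=\{q^\alpha : 0\le\alpha\le d_i\}$. Exactly as in the $q$-Dyson proof reviewed in \cite{KN} (compare Claim~\ref{trivi}), the contributing tuples $\bc=(q^{\alpha_1},\ldots,q^{\alpha_n})$ are parametrized by pairs $(\sigma,\boldsymbol{\beta})$, where $\sigma\in\Symm_n$ is the sorting permutation with $\alpha_{\sigma(1)}<\cdots<\alpha_{\sigma(n)}$, and $\beta_j:=\alpha_{\sigma(j)}-(j-1)k$ satisfies
\[
0\le\beta_1\le\beta_2\le\cdots\le\beta_n\le s_{\sigma(n)},\qquad \beta_{j+1}-\beta_j\ge\chi(\sigma(j)>\sigma(j+1)).
\]
The admissible index set $\Omega$ is finite and independent of $k$ once $k$ is large enough, which suffices since we are proving an identity of rational functions in $z=q^k$.

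For each $(\sigma,\boldsymbol{\beta})\in\Omega$, the contribution $T_{\sigma,\boldsymbol{\beta}}$ is a product of $\kappa$-factors and pair-evaluations of $G$. Standard $q$-calculus identities (of the type used throughout the paper) express these as ratios of $q$-Pochhammers $(q)_N$ with $N=pk+r$, where $p\ge 0$ is an integer depending only on $\sigma$ and $r$ is bounded in terms of $(\boldsymbol{\beta},r_i,s_i)$ alone. The key algebraic observation is
\[
(q)_{pk+r} = (q)_{pk}\prod_{t=1}^r(1-z^p q^t)\qquad (r\ge 0),
\]
with the analogous relation $(q)_{pk+r}=(q)_{pk}/\prod_{t=r+1}^{0}(1-z^p q^t)$ for $r<0$; each $(q)_N$ thus splits as a ``base'' $(q)_{pk}$ times a correction in $\Q(q)(z)$. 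After collecting signs and powers of $q$, the quadratic-in-$k$ part of the $q$-exponent turns out to be $(\sigma,\boldsymbol{\beta})$-invariant (for example $\sum_i(\sigma^{-1}(i)-1)=\binom{n}{2}$ is symmetric in $\sigma^{-1}$), while the remaining linear and constant parts contribute $z^{\text{bounded}}q^{\text{bounded}}$. Altogether
\[
T_{\sigma,\boldsymbol{\beta}}=B(k,q)\cdot R_{\sigma,\boldsymbol{\beta}}(z),
\]
with $B(k,q)$ independent of $(\sigma,\boldsymbol{\beta})$ and $R_{\sigma,\boldsymbol{\beta}}\in\Q(q)(z)$.

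To identify $B(k,q)$ it is enough to specialize to $r_i=s_i=0$: there $\Omega$ reduces to $\{(\id,\mathbf 0)\}$, all corrections become trivial, and the $q$-Dyson theorem yields $T_{\id,\mathbf 0}=\CT[\D_q(\bx;k)]=(q)_{nk}/(q)_k^n$. Hence $B(k,q)=(q)_{nk}/(q)_k^n$ in general, and summing over $\Omega$ gives
\[
\CT\Bigg[\frac{x_1^{r_1}\cdots x_n^{r_n}}{x_1^{s_1}\cdots x_n^{s_n}}\,\D_q(\bx;k)\Bigg]=\frac{(q)_{nk}}{(q)_k^n}\cdot Q(z),\qquad Q(z):=\sum_{(\sigma,\boldsymbol{\beta})\in\Omega}R_{\sigma,\boldsymbol{\beta}}(z)\in\Q(q)(z),
\]
as claimed.

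The main obstacle is to confirm that $B(k,q)$ is genuinely $(\sigma,\boldsymbol{\beta})$-independent, given the mild asymmetry between ascent pair-factors of shape $(q)_{\Delta+k}/(q)_{\Delta-k}$ and descent pair-factors of shape $(q)_{\Delta+k-1}/(q)_{\Delta-k-1}$. The cleanest resolution is to note that in either case the base $k$-parts of numerator and denominator are $(q)_{(p_2-p_1+1)k}$ and $(q)_{(p_2-p_1-1)k}$ respectively, the offsets $-1$ being absorbed into the rational-in-$z$ corrections; and that the denominator product
\[
\prod_{i=1}^n(q)_{(\sigma^{-1}(i)-1)k}(q)_{(n-\sigma^{-1}(i))k}=\Big[\prod_{j=0}^{n-1}(q)_{jk}\Big]^2
\]
is manifestly symmetric in $\sigma^{-1}(1),\ldots,\sigma^{-1}(n)$. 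This bookkeeping is routine but is the one step that genuinely needs care.
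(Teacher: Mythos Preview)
Your approach is essentially the paper's: apply Lemma~\ref{interpol} with $C_i=\{q^\alpha:0\le\alpha\le(n-1)k+s_i\}$, parametrize the surviving tuples by a permutation together with bounded offsets independent of $k$, and show each contribution is $(q)_{nk}/(q)_k^n$ times something rational in $z=q^k$. The paper organizes the last step more cleanly, however. Rather than extracting a common base $B(k,q)$ and then arguing it is $(\sigma,\boldsymbol\beta)$-invariant, the paper takes the \emph{ratio} of each contribution to the single $q$-Dyson contribution
\[
\frac{F(q^0,q^k,\dots,q^{(n-1)k})}{\psi_1\cdots\psi_n}=\frac{(q)_{nk}}{(q)_k^n},
\]
and checks separately that each of the three pieces---the monomial prefactor $\prod_i q^{\alpha_i r_i}$, the ratios $\psi_i/\psi_{\pi(i)}^*$, and the ratio $F(\dots,q^{\alpha_i},\dots)/F(q^0,q^k,\dots,q^{(n-1)k})$---is a rational function of $q^k$ depending only on $(\pi,\epsilon_1,\dots,\epsilon_n)$ and the fixed data. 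This completely sidesteps the obstacle you flag: no global symmetry in $\sigma$ needs to be verified, since each individual ratio is manifestly a finite product of factors $(1-q^{pk+r})^{\pm1}$ with $p$ determined by positions and $r$ bounded in terms of the offsets and $s_i$. Your route works, but the bookkeeping you call ``routine'' is genuinely avoidable.
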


\noindent
Expanding the degree zero part of  
\[
\prod_{j=1}^n(qx_j)_{a+\chi(j\le m)}(1/x_j)_b
\prod_{n_0<i<j\le n} (1-q^kx_i/x_j)(1-q^{k+1}x_j/x_i)
\] 
into a sum of monomial terms and applying the above lemma to each such term
individually, we find that there is a rational function $R\in \Q(q)(z)$
depending only on the parameters $n,m,n_0,a,b$ such that
\[
\CT[\L_q(x_0,\bx;\bB_{\A\Fo})]=R(q^k) \frac{(q)_{nk}}{(q)_k^n}.
\]
Reorganizing the formula in Theorem \ref{a-f} in the form
\[
P(q^k) \frac{(q)_{nk}}{(q)_k^n}
\]
with a function $P\in \Q(q)(z)$ which also depends only on $n,m,n_0,a,b$,
the theorem established for $k\ge a+1$ yields $P=R$, which in turn implies
the full content of the result. 
\medskip

It only remains to prove Lemma \ref{rational}, and this is executed 
with yet another application of Lemma \ref{interpol}. Since a similar ---
in fact more general --- result was found recently by Doron Zeilberger and
his able computer \cite{EZ}, we only give a brief account.
As the $k=0$ case is trivial, we will assume $k>0$.

\bigskip
\noindent{\em Proof of Lemma \ref{rational}.}
The constant term of $\D_q(\bx;k)$ equals the coefficient of
$\prod x_i^{(n-1)k}$ in the polynomial
\[
F(\bx)=\prod_{1\le i<j\le n} \Bigg(\prod_{t=0}^{k-1}(x_j-q^tx_i)
\times \prod_{t=1}^{k}(x_i-q^tx_j) \Bigg).
\]
Set $C_i=\{q^{\alpha_i}\mid \alpha_i\in [0,(n-1)k]\}$. Then $F(\bc)=0$
for every $\bc\in C_1\times\dots\times C_n$ except when
$c_i=q^{(i-1)k}$ for every $i$. According to Lemma \ref{interpol},
\[
\CT[\D_q(\bx;k)]=\frac{F(q^0,q^k,\dots,q^{(n-1)k})}{\psi_1\psi_2\dots\psi_n}=
\frac{(q)_{nk}}{(q)_k^n}\quad \mathrm{where}\quad
\psi_i={\prod}_{0\le j\le (n-1)k, j\ne (i-1)k}(q^{(i-1)k}-q^j).
\]
We compare this product to the constant term in the lemma, which equals the
coefficient of $\prod x_i^{(n-1)k+s_i}$ in the polynomial
$F^*(\bx)=x_1^{r_1}\dots x_n^{r_n}F(\bx)$. Accordingly we set
$C_i^*=\{q^{\alpha_i}\mid \alpha_i\in [0,(n-1)k+s_i]\}$ and note that
for $\bc\in C_1^*\times\dots\times C_n^*$ we have $F^*(\bc)\ne 0$ if and only if
the exponents $\alpha_i$ are all distinct and 
\[
\alpha_{\pi(i+1)}\ge \alpha_{\pi(i)}+k+\chi(\pi(i)>\pi(i+1))
\]
holds for $1\le i\le n-1$ with the unique permutation $\pi=\pi_{\bc}\in \Symm_n$
satisfying $\alpha_{\pi(1)}<\dots<\alpha_{\pi(n)}$. Consequently,
$\alpha_i=(\pi^{-1}(i)-1)k+\epsilon_i$ for some 
$\epsilon_i=\epsilon_i(\bc)\in [0,s_{\pi(n)}]$.

Set $\Ce=\{\bc\in C_1^*\times\dots\times C_n^*\mid F^*(\bc)\ne 0\}$, and write
$s=\max s_i$. It follows that 
\[
|\Ce|\le n!\binom{s+n}{n}.
\]
Moreover, the set 
$\S=\{(\pi_{\bc},\epsilon_1(\bc),\dots\epsilon_n(\bc))\mid \bc\in \Ce\}$
is independent of $k$; it depends only on $n$ and the numbers $s_i$.
It follows from Lemma \ref{interpol} that, using the notation $\tau=\pi^{-1}$,
\[
\CT\Bigg[ \frac{x_1^{r_1}\dots x_n^{r_n}}{x_1^{s_1}\dots x_n^{s_n}}\D_q(\bx;k) \Bigg]=
\sum_{\bc\in \Ce} \prod_{i=1}^n q^{((\tau(i)-1)k+\epsilon_i)r_i}
\frac{F(\dots,q^{(\tau(i)-1)k+\epsilon_i},\ldots)}{\psi_1^*\psi_2^*\dots\psi_n^*}
\]
where
\[
\psi_{\pi(i)}^*={\prod}_{0\le j\le (n-1)k+s_{\pi(i)}, j\ne (i-1)k+\epsilon_{\pi(i)}}
(q^{(i-1)k+\epsilon_{\pi(i)}}-q^j).
\]
One readily checks that for each $\Sigma=(\pi,\epsilon_1,\dots,\epsilon_n)
\in \S$ there exist rational functions $Q_i\in \Q(q)(z)$ that depend only on
$n$, the numbers $r_j,s_j$ and the sequence $\Sigma$, such that
\[
\prod_{i=1}^n q^{((\tau(i)-1)k+\epsilon_i)r_i}=Q_0(q^k),\quad
\frac{\psi_i}{\psi_{\pi(i)}^*}=Q_i(q^k)\quad \mathrm{and}\quad
\frac{F(\dots,q^{(\tau(i)-1)k+\epsilon_i},\ldots)}{F(q^0,q^k,\dots,q^{(n-1)k})}=
Q_{n+1}(q^k).
\]
The result follows.
\qed

\bigskip


\bigskip
\noindent
{\bf Acknowledgments}
\bigskip


We gratefully acknowledge the support of the Australian Research
Council, ERC Advanced Research Grant No. 267165, 
OTKA Grants 81310 and 100291,
RFBR Grants 11-01-00677-a, 13-01-00935-a and
13-01-12422-ofi-m, President of Russia Grant MK-6133.2013.1,
and Russian Government Mega-Grant Project 11.G34.31.0053.






\end{document}